\newtheorem{theorem}{Theorem}[section]
\newtheorem{prop}[theorem]{Proposition}
\newtheorem{lemma}[theorem]{Lemma}
\newtheorem{remark}[theorem]{Remark}
\newtheorem{notation}[theorem]{Notation}
\newtheorem{definition}[theorem]{Definition}
\DeclareMathOperator{\ep}{\epsilon}
\numberwithin{equation}{section}
\def\pf{{\it Proof:}~}
\numberwithin{equation}{section}
\numberwithin{equation}{section}
\def\pf{{\it Proof:}~}
\def\pf{{\it Proof:}~}
\begin{document}

\title[Extrinsic Bonnet-Myers Theorem and almost rigidity]{Extrinsic Bonnet-Myers Theorem and almost rigidity}
\author{Weiying Li, Guoyi Xu}
\address{ Weiying Li\\ School of Mathematical Sciences \\Xiamen University, Xiamen \\P.R. China} 
\address{ Current address: \\ Department of Mathematical Sciences\\Tsinghua University, Beijing\\ P.R. China }
\email{wy-li24@mails.tsinghua.edu.cn}
\address{Guoyi Xu\\Department of Mathematical Sciences\\Tsinghua University, Beijing\\P. R. China, 100084}
\email{guoyixu@tsinghua.edu.cn}
\date{\today}
\date{\today}

\begin{abstract}
We establish the extrinsic Bonnet-Myers Theorem for compact Riemannian manifolds with positive Ricci curvature.  And we show the almost rigidity for compact hypersurfaces, which have positive sectional curvature and almost maximal extrinsic diameter in Euclidean space.  
\\[3mm]
Mathematics Subject Classification: 53C21, 53A07, 52A20.
\end{abstract}
\thanks{G. Xu was partially supported by NSFC 12141103.}

\maketitle

\titlecontents{section}[0em]{}{\hspace{.5em}}{}{\titlerule*[1pc]{.}\contentspage}
\titlecontents{subsection}[1.5em]{}{\hspace{.5em}}{}{\titlerule*[1pc]{.}\contentspage}
\tableofcontents
\section{Introduction}

The well-known Bonnet-Myers theorem says: for complete Riemannian manifold $(M^n, g)$ with Ricci curvature $Rc\geq (n- 1)$,  the intrinsic diameter of $(M^n,  g)$ with respect to the Riemannian metric $g$ satisfies $\mathrm{Diam}_g(M^n)\leq \pi$. Furthermore Cheng \cite{Cheng} showed that the rigidity of Bonnet-Myers theorem,  which says that $\mathrm{Diam}_g(M^n)= \pi$ if and only if $(M^n, g)$ is isometric to $\mathbb{S}^n$.  
  
In the rest of this paper,  unless otherwise mentioned,  $(M^n,  g)$ is always a compact Riemannian manifold.

Recall we say that a smooth function $f: (M^n, g)\rightarrow \mathbb{R}^m$ is a smooth isometric embedding, if $f$ is injective and for any coordinate chart $\{x_i\}_{i= 1}^n$ on $M^n$, we have  
\begin{align}
g_{ij}= \langle\frac{\partial f}{\partial x_i},\frac{\partial f}{\partial x_i}\rangle_{\mathbb{R}^m}. \nonumber 
\end{align}
  
We use $\mathcal{IE}((M^n, g),  \mathbb{R}^m)$ to denote the set of all smooth isometric embedding $\mathscr{I}: (M^n,  g)\rightarrow \mathbb{R}^m$,  where $m\geq n+ 1$ and $n\geq 2$ are positive integers.  From the well-known Nash's isometric embedding theorem (see \cite{Nash}),  for any $(M^n, g)$,  there is $m\in \mathbb{Z}^+$ such that $\displaystyle \mathcal{IE}((M^n, g),  \mathbb{R}^m)\neq \emptyset$.

For $\mathscr{I}\in \mathcal{IE}((M^n, g),  \mathbb{R}^m)$,  we define
\begin{align}
\mathrm{Diam}_\mathscr{I}(M^n,  g)\vcentcolon= \sup_{x, y\in M^n}|\mathscr{I}(x)-\mathscr{I}(y)|_{\mathbb{R}^m}.  \nonumber 
\end{align}

If $\mathcal{IE}((M^n, g),  \mathbb{R}^m)\neq \emptyset$,  we define the \textbf{extrinsic diameter of $(M^n, g)$ in $\mathbb{R}^m$} as follows:
\begin{align}
\mathrm{Diam}_{\mathbb{R}^m}(M^n, g)\vcentcolon= \sup_{\mathscr{I}\in \mathcal{IE}((M^n, g),  \mathbb{R}^m)}\mathrm{Diam}_{\mathscr{I}}(M^n, g). \nonumber 
\end{align}

Spruck \cite{Spruck} showed: for any $(M^n, g)$ with sectional curvature $K(g)\geq 1$ and $\mathscr{I}\in \mathcal{IE}((M^n, g),  \mathbb{R}^{n+ 1})$,  there is $\mathrm{Diam}_\mathscr{I}(M^n)< \pi$.  A family of smooth examples was sketched in \cite{Spruck} to show this upper bound is sharp.  Those examples are spheres shrinking to a line segment with length $\pi$ in $\mathbb{R}^{n+ 1}$ (see Proposition \ref{prop max extrin diam examples-smooth in higer dimension} for details).


The first result of this paper is the following extrinsic Bonnet-Myers Theorem, which generalizes the above theorem of Spruck.
\begin{theorem}\label{thm main-0}
{For complete Riemannian manifold $(M^n, g)$ with $Rc\geq (n- 1)$,  we have 
\begin{align}
\mathrm{Diam}_\mathscr{I}(M^n,  g)< \pi, \quad \quad \quad \forall  \mathscr{I}\in \mathcal{IE}((M^n, g),  \mathbb{R}^m). \label{diam wrt f strict}
\end{align}
Furthermore (\ref{diam wrt f strict}) is sharp in the following sense: there exists a sequence of $(S^n, g_k)$ with $K(g_k)\geq 1$ and $\mathscr{I}_k\in\mathcal{IE}((S^n,g_k)$,  $\mathbb{R}^{n+1})$, such that $\displaystyle \lim_{k\rightarrow\infty}\mathrm{Diam}_{\mathscr{I}_k}(S^n, g_k)= \pi$.
}
\end{theorem}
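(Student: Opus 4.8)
The plan is to prove the two halves of Theorem~\ref{thm main-0} separately: the strict inequality $\mathrm{Diam}_\mathscr{I}(M^n,g)<\pi$ for every isometric embedding into any $\mathbb{R}^m$, and the sharpness via an explicit sequence of spheres.

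For the strict upper bound, I would argue as follows. Fix $\mathscr{I}\in\mathcal{IE}((M^n,g),\mathbb{R}^m)$ and two points $p,q\in M^n$ realizing (or nearly realizing) the extrinsic distance, and let $\gamma\colon[0,L]\to M^n$ be a minimizing unit-speed geodesic of $(M^n,g)$ from $p$ to $q$. By the classical Bonnet--Myers theorem, $L=d_g(p,q)\le \mathrm{Diam}_g(M^n)\le\pi$, with equality forcing rigidity by Cheng's theorem. The curve $c(t):=\mathscr{I}(\gamma(t))$ is a unit-speed curve in $\mathbb{R}^m$, and since $\mathscr{I}$ is isometric it has geodesic curvature equal to the intrinsic geodesic curvature of $\gamma$, which is zero; hence its Euclidean curvature vector is the second fundamental form of $\mathscr{I}$ applied to $\dot\gamma$. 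The key computation is to control the chord length $|c(L)-c(0)|$. Consider the scalar function $\phi(t)=\langle c(t),\xi\rangle$ for a fixed unit vector $\xi\in\mathbb{R}^m$; then $|\phi''(t)|=|\langle c''(t),\xi\rangle|\le |c''(t)|=|A(\dot\gamma,\dot\gamma)|$ where $A$ is the second fundamental form. The Gauss equation relates $|A(\dot\gamma,\dot\gamma)|$ to sectional curvatures; more to the point, one wants the differential-inequality comparison: writing $c(t)$ in arclength, the chord $|c(L)-c(0)|$ of a unit-speed curve whose curvature is bounded in a way compatible with $Rc\ge(n-1)$ along $\gamma$ should be strictly less than the chord of the corresponding great-circle arc of length $L\le\pi$ on $\mathbb{S}^n\subset\mathbb{R}^{n+1}$, namely $2\sin(L/2)\le 2$. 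The cleanest route is probably: the radial function $r(t):=|c(t)-o|$ from a suitable center $o$ (e.g. the circumcenter of $\mathscr{I}(M^n)$, or $o$ chosen so that $\frac{d^2}{dt^2}\frac12|c(t)-o|^2 \ge \cos(\text{something})$) satisfies a Riccati-type inequality forcing $\mathrm{Diam}_\mathscr{I}<\pi$; alternatively, integrate $\langle c(L)-c(0),c(L)-c(0)\rangle = \int_0^L\int_0^L \langle \dot c(s),\dot c(t)\rangle\,ds\,dt$ and bound $\langle\dot c(s),\dot c(t)\rangle$ using that $\mathscr{I}$ is an isometry (so $\langle \dot c(s),\dot c(t)\rangle = \cos\theta(s,t)$ where, via second-variation / Bonnet--Myers applied to the geodesic variation, $\theta(s,t)\ge |s-t|$ would give exactly $|c(L)-c(0)|^2\le 4\sin^2(L/2)$). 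The strictness then comes from the fact that equality throughout would force $\mathscr{I}(\gamma)$ to be a round arc in an affine $2$-plane and $(M^n,g)$ to be isometric to $\mathbb{S}^n$ realized as the standard round sphere — but Spruck's result (or a direct argument using that $\mathscr{I}$ is an embedding of all of $M^n$, not just the curve) rules out the standard sphere achieving diameter exactly $\pi$ under an isometric embedding, since the only isometric embedding of the round $\mathbb{S}^n$ into $\mathbb{R}^{n+1}$ up to rigid motion is the standard one with extrinsic diameter $2<\pi$. I expect the main obstacle to be precisely this rigidity/strictness step: ruling out equality requires knowing that no isometric embedding of any $(M^n,g)$ with $Rc\ge(n-1)$ attains the chord value $2\sin(L/2)$ for $L=\pi$, i.e. $2$; this needs the Cheng rigidity to pin down $(M^n,g)\cong\mathbb{S}^n$ and then a separate argument that the isometric image cannot be "thin enough" to have two points at Euclidean distance $\pi$.

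For the sharpness statement, I would construct the sequence explicitly, following the sketch attributed to Spruck (and presumably made precise in Proposition~\ref{prop max extrin diam examples-smooth in higer dimension}). Take in $\mathbb{R}^{n+1}$ a hypersurface of revolution obtained by rotating a profile curve about the $x_1$-axis: a curve in the $(x_1,x_2)$-plane that starts and ends on the axis, stays smooth (so the profile meets the axis orthogonally at both ends and has the right behavior so the rotated surface is a smooth closed hypersurface diffeomorphic to $S^n$), is symmetric, and is chosen so that its $x_1$-extent approaches $\pi$ while its $x_2$-width (the "radius" of revolution) shrinks to $0$. Concretely one can take the profile to be (a smoothed version of) $x_2=\varepsilon_k\, h(x_1)$ on $x_1\in[0,\pi]$ for a fixed bump-like profile $h$ with $h>0$ on $(0,\pi)$, $h(0)=h(\pi)=0$, $h'(0)=+\infty$, $h'(\pi)=-\infty$ matched to a half-circle near the endpoints so smoothness and the sectional curvature condition hold. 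One must check: (i) the resulting metric $g_k$ on $S^n$ has sectional curvature $K(g_k)\ge 1$ — this is a computation with the curvature of hypersurfaces of revolution (principal curvatures are $\kappa_{\text{profile}}$ and $\kappa_{\text{rotational}}=\cos(\text{profile angle})/x_2$), and as $\varepsilon_k\to0$ the rotational principal curvature blows up like $1/\varepsilon_k$, easily dominating and making all sectional curvatures large and positive away from a controlled region, while near the poles the half-circle caps of radius $\sim\varepsilon_k$ contribute curvature $\sim 1/\varepsilon_k^2\ge1$; (ii) the extrinsic diameter $\mathrm{Diam}_{\mathscr{I}_k}(S^n,g_k)$, which is just the Euclidean diameter of the hypersurface in $\mathbb{R}^{n+1}$, tends to $\pi$ because the two pole points are at distance $\to\pi$ along the axis while the whole surface lies in an $\varepsilon_k$-neighborhood of the segment $[0,\pi]\times\{0\}^{n}$. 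The routine-but-necessary verification is arranging the smooth matching at the poles so that simultaneously $K(g_k)\ge 1$ holds everywhere and the axial length stays $\ge \pi-\delta_k$ with $\delta_k\to0$; I would present this as a lemma about profile curves and defer the elementary estimates. Overall, the sharpness half is more bookkeeping than difficulty; the genuine content of the theorem — and the step I would flag as the crux — is establishing the \emph{strict} inequality uniformly over all $m$ and all isometric embeddings, where one cannot appeal to codimension-one geometry as in Spruck's argument and must instead run the Bonnet--Myers/second-variation comparison directly on the embedded geodesic and then upgrade non-strict to strict via Cheng rigidity.
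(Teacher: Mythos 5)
Your plan for the strict inequality does not close, and the quantitative bound you aim for is actually \emph{false}. You propose showing that the chord of the embedded geodesic satisfies $|c(L)-c(0)|\le 2\sin(L/2)$ (hence at most $2$), via a claimed inequality $\langle\dot c(s),\dot c(t)\rangle\le\cos|s-t|$. The theorem's own sharpness examples refute this: the needle-like spheres of Proposition~\ref{prop max extrin diam examples-smooth in higer dimension} have $K\ge 1$, hence $Rc\ge n-1$, yet their extrinsic diameter tends to $\pi>2$; for them the chord of the geodesic between the two poles is $\approx\pi$ while your bound would force it below $2$. The error is that the \emph{Euclidean} angle between $\dot c(s)$ and $\dot c(t)$ is not the intrinsic angle between parallel-transported tangent vectors along $\gamma$; the discrepancy is governed by the second fundamental form, not by intrinsic curvature alone, so the second-variation/Bonnet--Myers heuristic does not transfer to $\langle\dot c(s),\dot c(t)\rangle$. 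Your fallback via Cheng rigidity is also incomplete: even granting $(M,g)\cong\mathbb{S}^n$, you would still have to exclude \emph{all} isometric embeddings of the round sphere into $\mathbb{R}^m$ for arbitrary $m\ge n+1$ from realizing Euclidean distance $\pi$ between two points, and you offer no mechanism for that — this is exactly the codimension-free step you flag as the crux but do not supply.

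The paper's argument is shorter, works in every codimension, and in particular does not use Cheng's theorem. Assume $|\mathscr{I}(p)-\mathscr{I}(q)|=\pi$. Bonnet--Myers gives $d_g(p,q)\le\pi$; trivially $d_g(p,q)\ge|\mathscr{I}(p)-\mathscr{I}(q)|$, so $d_g(p,q)=\pi$ and the minimizing geodesic $\gamma$ from $p$ to $q$ has the same length as the Euclidean chord, forcing $\mathscr{I}\circ\gamma$ to be a straight line segment in $\mathbb{R}^m$. Along that segment $S(e_1,e_1)=0$ (with $e_1=\dot\gamma$), so the Gauss equation with $\widetilde{Rm}=0$ yields $Rc(e_1,e_1)=-\sum_{i\ge 2}|S(e_1,e_i)|^2\le 0$, contradicting $Rc\ge n-1>0$. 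Your sharpness half (profiles of revolution collapsing to a segment of length $\pi$ while keeping $K\ge1$) is in line with the paper's construction in Lemmas~\ref{lem big extrin diam examples with singularity}--\ref{lem hypersurface and wraped metric} and Proposition~\ref{prop max extrin diam examples-smooth in higer dimension}, so that part of the plan is sound.
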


\begin{remark}\label{rem ED in Rm upper bound}
{Although (\ref{diam wrt f strict}) is sharp in the above sense; for complete Riemannian manifold $(M^n, g)$ with $Rc\geq (n- 1)$ and $m\geq n+ 1$, we currently do not know whether $\mathrm{Diam}_{\mathbb{R}^m}(M^n, g)< \pi$ generally holds or not. However, Theorem \ref{thm main-1} provides partial result when $K\geq 1$ and $m= n+ 1$.
}
\end{remark} 

From the rigidity part of Bishop-Gromov's volume comparison Theorem,  for complete Riemannian manifold $(M^n,g)$ with $Rc(g)\ge (n- 1)$,  we have $V(M^n)= V(\mathbb{S}^n)$ if and only if $M^n$ is isometric to $\mathbb{S}^n$. Furthermore, there is almost rigidity with respect to almost maximal volume in the above context. To explain it, we recall some concepts as follows. 

For two subsets $A, B$ of a metric space $Z$, the \textbf{Hausdorff distance} between $A$ and $B$ among $Z$ is 
\begin{align}
d_H^Z (A, B)= \inf\big\{\epsilon> 0: B\subset \mathbf{U}_{\epsilon}(A)\ and \ A\subset \mathbf{U}_{\epsilon}(B)\big\} \nonumber 
\end{align}
where $\displaystyle \mathbf{U}_{\epsilon}(A)\vcentcolon= \big\{z\in Z: d_Z(z,  A)\leq \epsilon\big\}$. The \textbf{Gromov-Hausdorff distance} (also see \cite{Gromov-book}) between two metric space $X, Y$, is denoted as $d_{GH}(X, Y)$,
\begin{align}
d_{GH}(X, Y)= \inf_{\mathscr{I}_1\in \mathscr{IE}(X, Z) \atop \mathscr{I}_2\in \mathscr{IE}(Y, Z)} d_H^Z \big(\mathscr{I}_1(X), \mathscr{I}_2(Y)\big) \nonumber   
\end{align} 
where $Z$ is any metric space with non-empty $\mathscr{IE}(X, Z)$ and $\mathscr{IE}(Y, Z)$; and $\mathscr{IE}(X, Z)$ is the set of all isometric embedding of $X$ into $Z$, similarly $\mathscr{IE}(Y, Z)$ is defined.  If $d_{GH}\big((X, d_X), (Y, d_Y)\big)\leq \epsilon$, we say that $(X, d_X)$ is $\epsilon$-Gromov-Hausdorff close to $(Y, d_Y)$. 

\begin{remark}\label{rem different IE}
{The set $\mathcal{IE}((M^n, g), \mathbb{R}^m)$ contains only smooth isometric embeddings,  which not only keep the distance property but also preserve the property of Riemannian manifolds' curvature.  On the other hand,  the set $\mathscr{IE}(X, Z)$ contains all isometric embeddings between two metric spaces $X, Z$; which is only distance-preserving (comparing \cite{Nash-C1}).  
}
\end{remark}

Colding \cite{Colding-shape},  \cite{Colding-large} (also see \cite{WZ}) proved the almost rigidity of Bishop-Gromov's volume comparison Theorem. More concretely, he showed that $(M^n,  g)$ with $Rc\geq (n- 1)$,  is Gromov-Hausdorff close to $\mathbb{S}^n$,  if and only if the volume of $(M^n, g)$ is almost maximal (i.e.  is close to the volume of $\mathbb{S}^n$).  

Note the model space with respect to almost maximal volume is $\mathbb{S}^n$.  

On the other hand,  there is no almost rigidity for almost maximal intrinsic diameter, although there is Cheng's rigidity theorem of maximal diameter \cite{Cheng}.  There are round spheres with almost maximal intrinsic diameter and `needle type' convex spheres (see examples in Proposition \ref{prop max extrin diam examples-smooth in higer dimension}), both of them have almost intrinsic maximal diameter; but they are not Gromov-Hausdorff close to each other.

However, with respect to the extrinsic diameter, we have the following almost rigidity theorem with the collapsing model $[0, \pi]$.
\begin{theorem}\label{thm main-1}
	For complete Riemannian manifold $(M^n,g)$ with $K(g)\ge1$ and $\displaystyle \mathcal{IE}((M^n, g),  \mathbb{R}^{n+ 1})\neq \emptyset$,  we have
	\begin{align}
	\frac{d_{GH}((M^n, g), [0, \pi])}{\sqrt{\pi- \mathrm{Diam}_{\mathbb{R}^{n+ 1}}(M^n, g)} }  \leq 4\pi^{\frac{3}{2}}  \nonumber .
	\end{align}
\end{theorem}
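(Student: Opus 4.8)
The plan is to prove a quantitative statement: if $(M^n,g)$ has $K(g)\ge 1$ and admits an isometric embedding into $\mathbb R^{n+1}$, then its extrinsic diameter being close to $\pi$ forces $(M^n,g)$, with its \emph{intrinsic} metric, to be Gromov–Hausdorff close to the segment $[0,\pi]$. Fix an embedding $\mathscr I\in \mathcal{IE}((M^n,g),\mathbb R^{n+1})$ and points $p,q\in M^n$ realizing (up to $\epsilon$) the extrinsic diameter, so $|\mathscr I(p)-\mathscr I(q)|_{\mathbb R^{n+1}}$ is close to $\pi$; write $\ell := |\mathscr I(p)-\mathscr I(q)|$ and set $\delta := \pi-\ell$, which is controlled by $\pi-\mathrm{Diam}_{\mathbb R^{n+1}}(M^n,g)$. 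The first step is to recall, from the proof of Spruck's bound (which I would either cite or reconstruct via the support-function / Gauss-map argument used earlier in the paper for Theorem \ref{thm main-0}), the key geometric input: a convex hypersurface in $\mathbb R^{n+1}$ with $K\ge 1$ whose extrinsic diameter is nearly $\pi$ must be ``thin'' transverse to the segment $\overline{\mathscr I(p)\mathscr I(q)}$. Concretely, the width of $\mathscr I(M^n)$ in every direction orthogonal to $v:=\frac{\mathscr I(q)-\mathscr I(p)}{\ell}$ is bounded by a function of $\delta$ that vanishes as $\delta\to 0$; this is where the rate $\sqrt{\delta}$ enters, since for a convex body of intrinsic curvature $\ge 1$ a chord of length $\pi-\delta$ near the diameter direction already has the two endpoints nearly antipodal on a nearly-great circle, and elementary trigonometry on the boundary gives transverse width $\lesssim \sqrt{\delta}$.

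The second step converts this extrinsic thinness into a Gromov–Hausdorff estimate. Let $\pi_v:\mathbb R^{n+1}\to\mathbb R$ be the orthogonal projection onto the line through $\mathscr I(p)$ in direction $v$, and define $h := \pi_v\circ\mathscr I : M^n\to\mathbb R$; after translating, $h(M^n)$ is an interval $[a,b]$ with $b-a\le \ell\le\pi$ and also $b-a\ge \ell$ (it contains both endpoints' projections), so $h(M^n)=[0,\ell]$ up to rigid motion. I claim $h$ is an $\epsilon'$-Gromov–Hausdorff approximation $M^n\to[0,\pi]$ with $\epsilon'\lesssim\sqrt\delta+(\pi-\ell)\lesssim\sqrt\delta$. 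Surjectivity onto $[0,\pi]$ up to $\pi-\ell\le\delta\le\sqrt\delta$ (for $\delta\le 1$) is immediate. For the almost-distance-preserving property, take $x,y\in M^n$; one inequality, $|h(x)-h(y)|\le |\mathscr I(x)-\mathscr I(y)|\le d_g(x,y)$, is trivial since orthogonal projection and the inclusion $\mathscr I$ are $1$-Lipschitz (the latter because $\mathscr I$ is a Riemannian isometric immersion, so Euclidean distance $\le$ intrinsic distance). The reverse inequality $d_g(x,y)\le |h(x)-h(y)|+C\sqrt\delta$ is the heart of the matter: here I would use that, by the extrinsic thinness, $\mathscr I(x)$ and $\mathscr I(y)$ lie within $C\sqrt\delta$ of the axis, hence $|\mathscr I(x)-\mathscr I(y)|\le |h(x)-h(y)|+2C\sqrt\delta$; and then I must upgrade the \emph{Euclidean} distance bound to an \emph{intrinsic} one. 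For this I invoke that a hypersurface with $K\ge 1$ has intrinsic diameter $\le\pi$ (Bonnet–Myers) and, more precisely, that on a convex body pinched near a segment the intrinsic and extrinsic metrics are uniformly comparable with error $\lesssim\sqrt\delta$ — a minimizing geodesic in $M^n$ between two nearly-axial points cannot wander far, being trapped in the thin slab, so its length exceeds the chord length by at most $O(\sqrt\delta)$.

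Combining: $d_g(x,y)\le |h(x)-h(y)|+C'\sqrt\delta$ and $|h(x)-h(y)|\le d_g(x,y)$ show $h$ distorts distances by at most $C'\sqrt\delta$, and $h$ is $\delta$-dense onto $[0,\ell]\subset[0,\pi]$ with $|\ell-\pi|\le\delta$; standard bookkeeping then gives $d_{GH}((M^n,g),[0,\pi])\le C''\sqrt{\pi-\mathrm{Diam}_{\mathbb R^{n+1}}(M^n,g)}$, and I would track constants carefully through the trigonometric estimate of Step 1 and the geodesic-trapping estimate of Step 2 to land on the clean bound $4\pi^{3/2}$. I expect the main obstacle to be Step 2's upgrade from extrinsic to intrinsic distances with the \emph{sharp} $\sqrt\delta$ rate and the explicit constant: one must control how much an intrinsic minimizing geodesic between two slab-confined points can deviate, and the cleanest route is probably to bound the length of such a geodesic by projecting it to the axis (length $\ge |h(x)-h(y)|$ trivially) and bounding its transverse oscillation energy using the width bound $\lesssim\sqrt\delta$ together with convexity of $\mathscr I(M^n)$, so that its total length is $\le|h(x)-h(y)|+O(\sqrt\delta)$. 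The supremum over all embeddings $\mathscr I$ is harmless because the bound in Step 1 is uniform in $\mathscr I$, depending only on $n$-independent Euclidean convex geometry.
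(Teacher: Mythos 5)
Your overall architecture is the right one and matches the paper's: establish an ``extrinsic thinness'' estimate of order $\sqrt{\delta}$ transverse to the line $l_{\mathscr I(p),\mathscr I(q)}$ (this is the paper's Proposition~\ref{prop one-side GH-appr}, obtained from Toponogov's theorem plus the excess-function bound of Lemma~\ref{lem excess est for K geq 1} and the cosine law in Lemma~\ref{lem height by excess}, rather than the support-function/Gauss-map route you gesture at), then use it to produce a $C\sqrt{\delta}$-Gromov--Hausdorff approximation between $(M^n,g)$ and $[0,\pi]$, and finally convert via a standard $d_{GH}\le 4\epsilon$ lemma. However, there is a genuine gap at exactly the step you flagged as ``the heart of the matter,'' and the heuristic you offer to fill it does not work.

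You claim that a minimizing geodesic of $(M^n,g)$ between two points of $\mathscr I(M^n)$ ``cannot wander far, being trapped in the thin slab, so its length exceeds the chord length by at most $O(\sqrt\delta)$.'' Confinement to a slab of width $\sqrt\delta$ gives no upper bound on the length of a curve; it only gives the lower bound $\ell\geq |h(x)-h(y)|$ by projection. Appealing vaguely to ``transverse oscillation energy'' plus convexity does not close this. The actual mechanism in the paper is different and more specific: it never bounds the length of an arbitrary geodesic. Instead it (i) deduces from $K\geq 1$ that $Rc>0$, hence by Lemma~\ref{lem from Rc>0 to prin curv > 0} (using the Gauss equation and van Heijenoort's theorem) that $\mathscr I(M^n)\subset\mathbb R^{n+1}$ is a closed \emph{strictly convex} hypersurface; (ii) for two points $y_1,y_2$ on the \emph{same} level set $P^{-1}(t)$, slices $\mathscr I(M^n)$ by the $2$-plane through $w_t,y_1,y_2$ to obtain a closed convex plane curve $\gamma$ contained in a disk of radius $\sqrt{\pi\epsilon}$; (iii) invokes Lemma~\ref{lem length comp for convex hypersurface} (nearest-point projection onto a convex set is $1$-Lipschitz, then the area formula) to get $\ell(\gamma)\le 2\pi\sqrt{\pi\epsilon}$, whence $d_g(\mathscr I^{-1}(y_1),\mathscr I^{-1}(y_2))\le\frac12\ell(\gamma)\le\pi\sqrt{\pi\epsilon}$. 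This same-level-set intrinsic distance bound is then combined with a single minimizing geodesic $\gamma_{q,p}$ to define the GH approximation $G:[0,\pi]\to M^n$; the density of $G([0,\pi])$ comes precisely from (iii). Without some version of steps (i)--(iii) — in particular the convexity input and the perimeter comparison of Lemma~\ref{lem length comp for convex hypersurface} — your reverse inequality $d_g(x,y)\le|h(x)-h(y)|+C\sqrt\delta$ is unjustified, and the stated constant $4\pi^{3/2}$ certainly cannot be extracted. You should also note that your proposal uses $h:M^n\to[0,\pi]$ while the paper uses $G:[0,\pi]\to M^n$; both directions are fine once the level-set bound is in hand, but either way the missing ingredient is the same.
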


\begin{remark}\label{rem K replace by Ricci}
{If the assumption $K(g)\geq 1$ is replaced by $Rc(g)\geq n- 1$,  where $n\geq 3$,  we do not know whether the above conclusion is true or not.
}
\end{remark} 

The organization of this paper is as follows. We prove the extrinsic Bonnet-Myers Theorem (Theorem \ref{thm main-0}) in Section \ref{sec extrin BM}. Specially, the examples showing the sharpness of extrinsic diameter upper bound, is constructed in details. And the sharp bound is obtained through applying the Cheng's rigidity Theorem for Bonnet-Myers' Theorem. The assumption of this section is $Rc\geq (n- 1)$, and there is not restriction on the co-dimension of isometric embeddings.

In Section \ref{sec height}, using Toponogov's comparison Theorem, some facts of Euclidean geometry and spherical geometry, we estimate the height of Euclidean triangles in term of the gap between sharp upper bound and extrinsic diameter of manifolds, where the vertexes of those Euclidean triangles are in the image of isometric embedded Riemannian manifolds in Euclidean spaces. Sectional curvature $K(g)\geq 1$ is needed in this section. The results of this section imply that the isometric embedding image of Riemannian manifolds with almost maximal extrinsic diameter lies in an Euclidean neighborhood of a line segment in the ambient Euclidean space. The final main estimate obtained in this section can be viewed as the upper bound of `extrinsic width' of manifolds isometrically embedded into $\mathbb{R}^m$.

Finally, on manifolds with almost maximal extrinsic diameter, we consider `height function', which is the projection map onto the line segment corresponding to the extrinsic diameter. We get the intrinsic diameter's upper bound of the level set of `height function'. This will be obtained by the convexity of isometric embedding image of manifolds and the `extrinsic width estimate'  obtained in Section \ref{sec height}. The convexity relies on the co-dimension of isometric embedding equal to $1$. 
Then we show that the map (which maps the interval to the geodesic segment linking the end points of extrinsic diameter) is a Gromov-Hausdorff approximation, with respect to the scale of the gap between extrinsic diameter and its sharp upper bound. Combining the relationship between Gromov-Hausdorff approximation and Gromov-Hausdorff distance, we get the almost rigidity.

\section{The extrinsic Bonnet-Myers Theorem}\label{sec extrin BM}

We fix some notations, which will be used repeatedly in the rest of the paper.
\begin{notation}\label{notation length geod seg and line}
{For any curve $\gamma\subseteq (M^n, g)$, we use $\ell(\gamma)$ to denote the length of the curve $\gamma$. For $p, q\in (M^n, g)$, we use $\gamma_{p, q}$ to denote one geodesic segment from $p$ to $q$ in $(M^n, g)$. Then $\ell(\gamma_{p, q})= d_g(p, q)$. 

For distinct points $x, y\in \mathbb{R}^m$, we use $l_{x, y}$ to denote the line passing $x, y$ and $\overline{xy}$ to denote the line segment from $x$ to $y$. We use $|x-y|$ or $|x-y|_{\mathbb{R}^m}$ to denote the Euclidean length of $\overline{x y}$.
}
\end{notation}

In this section, for any $\epsilon\in (0, \pi)$, we firstly construct a smooth Riemannian manifold $(S^n,g)\subset\mathbb{R}^{n+1}$ with $K(g)\ge 1$, and some $\mathscr{I}\in\mathcal{IE}((S^n,g),  \mathbb{R}^{n+1})$, such that $\mathrm{Diam}_{\mathscr{I}}(S^n,g)\ge\pi-\epsilon$.  Then we prove the extrinsic Bonnet-Myers Theorem, whose sharpness is guaranteed by the example in Proposition \ref{prop max extrin diam examples-smooth in higer dimension}.

For $(S^n, g)$ with normal coordinate chart $\{t,\theta_1,...,\theta_{n- 1}\}$ and metric $g= dt^2+ f^2(t)d\theta^2$,  where $d\theta=d\theta_1d\theta_2...d\theta_{n-1}$ is the canonical measure on $\mathbb{S}^{n-1}$,  we select an orthonormal basis $\{E_i\}$ where $E_1=\frac{\partial}{\partial t}$,  for all $X,  Y= E_i$ where $i\neq 1$.  The sectional curvatures are as follows:
\begin{align}
	K(E_1,  X)=-\frac{f''}{f}, \quad \quad K(X,  Y)= \frac{1-(f')^2}{f^2},  \quad \quad \forall X\neq Y. \label{sectional curv formula}
\end{align}

The following lemma is well-known (see \cite{PP}). 
\begin{lemma}\label{lem smooth metric criterion}
	If $f:[c, b]\rightarrow [0,\infty)$ is smooth and $f(c)=0$, the metric of $(S^n,  g)$ is $g=dt^2+f^2(t)d\theta^2$,  then $g$ is smooth at $t=c$ if and only if
	\begin{align}
		\nonumber	 f'(c)=1,\quad \quad 	f^{(2k)}(c)=0, \quad \quad \forall k\in \mathbb{Z}^+,
	\end{align}
	where $f^{(2k)}$ is the $2k$-order derivative of $f$. 
\end{lemma}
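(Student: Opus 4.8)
The plan is to prove the smoothness criterion by passing to the Euclidean chart near the pole and checking that the metric extends smoothly across $t = c$. First I would set $c = 0$ without loss of generality by translation, so we have $f : [0, b] \to [0, \infty)$ smooth with $f(0) = 0$ and the metric $g = dt^2 + f^2(t)\, d\theta^2$ on a punctured neighborhood of the ``pole'' $t = 0$; here $d\theta^2$ denotes the round metric on $\mathbb{S}^{n-1}$. The natural candidate for a smooth chart near the pole is polar-to-Cartesian: write points as $x = (t\,\omega_1, \dots, t\,\omega_n) = t\,\omega$ where $\omega \in \mathbb{S}^{n-1} \subset \mathbb{R}^n$, so that $t = |x|$ and $\omega = x/|x|$. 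In these coordinates the round metric $dt^2 + t^2 d\theta^2$ is the flat Euclidean metric $\sum dx_i^2$, and our metric is $g = dt^2 + f^2(t)\, d\theta^2$. The idea is that $g$ is smooth at $x = 0$ if and only if, relative to the flat metric, the difference is controlled by functions of $t^2 = |x|^2$.

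Next I would make the comparison explicit. Write $g - g_{\mathrm{eucl}} = \big(f^2(t) - t^2\big)\, d\theta^2 = \dfrac{f^2(t) - t^2}{t^2}\,\big(t^2 d\theta^2\big)$, and note that $t^2 d\theta^2 = g_{\mathrm{eucl}} - dt^2$ has smooth coefficients in the $x_i$ (indeed $t^2 d\theta^2 = \sum dx_i^2 - \frac{1}{|x|^2}\big(\sum x_i\, dx_i\big)^2$, whose coefficients $\delta_{ij} - x_i x_j/|x|^2$ are bounded but \emph{not} continuous at the origin; this is the subtlety). A cleaner route is to use the standard fact (see the reference to \cite{PP}) that a warped-product metric $dt^2 + h^2(t) d\theta^2$ on a neighborhood of a pole extends to a smooth metric iff $h$ extends to a smooth \emph{odd} function of $t$ with $h'(0) = 1$: equivalently, the even-order derivatives of $h$ vanish at $0$ and $h'(0) = 1$. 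So I would reduce the claim to this characterization: writing $\phi(t) = f(t)$, smoothness of $g$ at the pole is equivalent to $\phi$ being (the restriction of) a smooth odd function with $\phi'(0) = 1$, which is exactly $f'(0) = 1$ and $f^{(2k)}(0) = 0$ for all $k \geq 1$.

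For the ``if'' direction I would argue as follows: assume $f'(0) = 1$ and $f^{(2k)}(0) = 0$ for all $k \in \mathbb{Z}^+$. Then the Taylor expansion of $f$ at $0$ has only odd-order terms, so one can write $f(t) = t\, \psi(t^2)$ for a smooth function $\psi$ with $\psi(0) = 1$ (this uses Whitney's/Hadamard's lemma: a smooth even function of $t$ is a smooth function of $t^2$, applied to $f(t)/t$ which extends evenly). Then $f^2(t)\, d\theta^2 = \psi(t^2)^2 \cdot t^2 d\theta^2$, and since $\psi(|x|^2)^2$ is a smooth function of $x$ near $0$ and $t^2 d\theta^2 = g_{\mathrm{eucl}} - dt^2 = \sum dx_i dx_j (\delta_{ij} - x_i x_j/|x|^2)$ — wait, that last piece is the obstruction — so instead I would write $g = dt^2 + \psi(t^2)^2(g_{\mathrm{eucl}} - dt^2) = \psi(|x|^2)^2 g_{\mathrm{eucl}} + (1 - \psi(|x|^2)^2)\, dt^2$, and observe $dt^2 = d|x|^2/(4|x|^2) \cdot \ldots$; more robustly, $(1 - \psi(t^2)^2)$ vanishes to infinite order... no: it vanishes only at $t=0$ to first order in general. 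The correct bookkeeping is: $dt^2 = \big(\sum x_i\, dx_i\big)^2/|x|^2$, so $g = \psi(|x|^2)^2 g_{\mathrm{eucl}} + \dfrac{1 - \psi(|x|^2)^2}{|x|^2}\big(\textstyle\sum x_i dx_i\big)^2$, and the scalar coefficient $\dfrac{1 - \psi(r)^2}{r}$ (with $r = |x|^2$) is smooth in $r$ because $1 - \psi(0)^2 = 0$; hence $g$ has smooth coefficients in $x$. For the ``only if'' direction I would run this backwards: smoothness of $g$ in some chart forces, by restricting to a ray $\omega = $ const, that $f$ be the boundary value of a smooth odd function, giving the stated conditions on the derivatives, and $f'(0) = 1$ comes from the requirement that $g$ restricted to a ``unit sphere'' $\{t = \text{small}\}$ degenerates at the right rate (equivalently, that the metric is nondegenerate and Euclidean to leading order at the pole).

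The main obstacle is handling the angular part $t^2\, d\theta^2$ correctly: its coefficients in Cartesian coordinates, $\delta_{ij} - x_i x_j/|x|^2$, are genuinely discontinuous at the origin, so one cannot naively multiply by a smooth scalar and conclude. The resolution — and the technical heart of the argument — is to always combine this term with $g_{\mathrm{eucl}}$ and $dt^2$ so that the discontinuous factors cancel, isolating a scalar coefficient of the form (smooth even function minus its value at $0$)$/|x|^2$ (or $/|x|$), which is smooth precisely because of the vanishing of even-order derivatives. I expect that the cleanest writeup simply cites \cite{PP} for the equivalence ``smooth at the pole $\iff$ $f$ extends to a smooth odd function with $f'(0)=1$'' and then records the elementary Taylor-coefficient translation of that condition into the displayed form; the bulk of the work is this translation, which is routine once the odd-extension fact is in hand.
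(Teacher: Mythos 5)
The paper does not prove this lemma at all: it is stated as ``well-known (see \cite{PP})'' and immediately followed by \verb|\qed|, so there is no in-paper argument to compare against. Your proposal supplies the standard proof, and the ``if'' direction as you finally write it is correct and complete: given $f^{(2k)}(c)=0$ and $f'(c)=1$, the odd-extension plus Whitney's even-function theorem gives $f(t)=t\,\psi(t^{2})$ with $\psi$ smooth, $\psi(0)=1$, and the decomposition
$g=\psi(|x|^{2})^{2}g_{\mathrm{eucl}}+\frac{1-\psi(|x|^{2})^{2}}{|x|^{2}}\bigl(\sum x_i\,dx_i\bigr)^{2}$
has manifestly smooth coefficients once you note (Hadamard) that $\bigl(1-\psi(r)^{2}\bigr)/r$ is smooth at $r=0$. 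Positive-definiteness near the pole follows by continuity since $g(0)=g_{\mathrm{eucl}}$.

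The ``only if'' direction, however, is only gestured at, and the one concrete idea you mention does not work: \emph{restricting to a ray $\omega=\text{const}$ gives no information about $f$}, since along a radial ray $g(\partial_t,\partial_t)=1$ identically, independent of $f$. The content of the criterion lives entirely in the angular part. A clean way to extract it: for the rotational field $V_{ij}=-x_j\partial_i+x_i\partial_j$, tangent to the level spheres, one computes $g(V_{ij},V_{ij})=\frac{f(|x|)^{2}}{|x|^{2}}(x_i^{2}+x_j^{2})$, and summing over $i<j$ gives $\sum_{i<j}g(V_{ij},V_{ij})=(n-1)f(|x|)^{2}$. Thus smoothness of $g$ forces $f(|x|)^{2}$ to be smooth in $x$, hence $f^{2}(t)$ to be a smooth function of $t^{2}$ (Whitney), which together with $f\ge 0$, $f(c)=0$ and $f'(c)\ne 0$ gives $f^{(2k)}(c)=0$. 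Finally $f'(c)=1$ is forced by continuity of the off-diagonal entries $g_{ij}=\frac{1-\widetilde F(|x|^{2})}{|x|^{2}}x_i x_j$ ($i\ne j$), where $f^{2}(t)=t^{2}\widetilde F(t^{2})$: this requires $\widetilde F(0)=f'(c)^{2}=1$. With that fix the argument is complete; as you note, in practice one simply cites \cite{PP}, which is exactly what the paper does.
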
\qed

According to \cite{Spruck}, the example manifold (see Proposition \ref{prop max extrin diam examples-smooth in higer dimension}) was pointed out by Calabi. We provide a detailed construction of the example for completeness reason.

\begin{remark}\label{rem key points of construction}
	{There are two key points of the construction of example manifolds in Proposition \ref{prop max extrin diam examples-smooth in higer dimension}. The first one is to define the twisted factor $f(t)$ as the solution of ODE (\ref{ODE for f}); and this ODE comes from the curvature term $-\frac{f''}{f}\geq 1$. Therefore we reduce the construction of the metric to the choice of suitable function $h$ in (\ref{ODE for f}). 
		
		The second idea is: to solve $f$ with standard initial data at starting point $t= 0$, and get the upper bound of $f'(c)$ where $t= c$ is another end point; then scaling the metric by $|f'(c)|^{-1}$, to guarantee the smoothness of the metric obtained by the new function $\tilde{f}$. 
	}
\end{remark}

\begin{lemma}\label{lem big extrin diam examples with singularity}
	For any $k\geq 100$, there exists $(S^n, g_f)$ with $\displaystyle g_f= dt^2+f^2(t)d\theta^2, t\in [-c, c]$, where $c\geq \frac{\pi}{2}- \frac{1}{k}+ \frac{1}{4k^2}$ and $f$ is an even function; such that $(S^n, g_f)$ is smooth except two points with $t= \pm c$, and 
	\begin{align}
		&K(g_f)\geq 1, \quad \quad \quad \forall t\in [-\frac{\pi}{2}+ \frac{1}{k}, \frac{\pi}{2}- \frac{1}{k}], \nonumber \\
		&f^{(even)}(\pm c)= 0, \quad \quad \quad |f'(\pm c)|\geq  \frac{3k}{16}, \label{singular 1st deri} \\
		&f''(t)+ f(t)\leq 0, \quad \quad  f^2(t)+ (f')^2(t)- (f'(c))^2\leq 0, \quad \quad \quad \forall t\in [-c, c]. \nonumber 
	\end{align}
\end{lemma}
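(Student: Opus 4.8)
The plan is to construct $f$ as the solution of a carefully chosen second-order ODE on $[0,c]$, extended to an even function on $[-c,c]$, following the two ideas recorded in Remark \ref{rem key points of construction}. Concretely, I would look for $f$ solving an equation of the form $f'' + h(t)f = 0$ with $h(t)\geq 1$ on the relevant subinterval; the simplest honest choice is to take $h\equiv 1$ on a central interval and then modify it near the endpoints so that $f$ reaches zero with controlled first derivative. Starting from the standard spherical data $f(0)=1$, $f'(0)=0$ forces $f(t)=\cos t$ as long as $h\equiv 1$, which vanishes only at $t=\pi/2$ with $f'(\pi/2)=-1$; that slope is too small, so the construction must deviate from $h\equiv 1$ near $\pi/2$ in order to push $|f'(c)|$ up to at least $3k/16$ while still reaching $f(c)=0$ at some $c\geq \pi/2 - 1/k + 1/(4k^2)$. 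The natural device is: keep $h\equiv 1$ (hence $f=\cos t$) on $[0,\pi/2-1/k]$, and on the short window $[\pi/2 - 1/k, c]$ replace the equation by one that still has $-f''/f$ not constrained (curvature bound is only required away from this window, per the statement) but whose solution plummets to zero with a large negative slope. For instance one can glue in a concave piece where $f'' \le -f$ still holds but $f''$ is much more negative, steering $f$ from the value $\cos(\pi/2-1/k)=\sin(1/k)\approx 1/k$ down to $0$ over a length $\approx 1/k + 1/(4k^2)$, which by the mean value theorem alone gives an average slope of magnitude roughly $1$; to get slope $\gtrsim k$ one instead lets $f$ behave like a multiple of $\cos$ with a rescaled argument, i.e. $f(t)= A\cos\big(\lambda(t - t_0) + \varphi\big)$ with $\lambda$ of order $k$ on the window, matched in value and derivative at $t_0 = \pi/2 - 1/k$.

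After fixing the profile on $[0,c]$ I would verify the three displayed groups of conclusions in \eqref{singular 1st deri} and the line after it. The curvature bound $K(g_f)\geq 1$ on $[-\pi/2+1/k,\pi/2-1/k]$ is immediate from \eqref{sectional curv formula}: there $f=\cos t$, so $-f''/f = 1$ and $(1-(f')^2)/f^2 = (1-\sin^2 t)/\cos^2 t = 1$. The evenness of $f$ gives $f^{(\mathrm{odd})}(0)=0$ automatically, and I must check $f^{(\mathrm{even})}(\pm c)=0$: this is a constraint on the endpoint window profile, and choosing the rescaled-cosine piece $f = A\cos(\lambda(t-t_0)+\varphi)$ arranged so that $c$ is exactly a zero of this cosine makes all even derivatives of $\cos$ vanish there (they are $\pm A\lambda^{2j}\cos(\cdot)=0$), so $f^{(\mathrm{even})}(c)=0$ as required. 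The slope estimate $|f'(\pm c)|\geq 3k/16$ then reads $A\lambda \geq 3k/16$, which I arrange by the value-matching condition $A\cos\varphi = \sin(1/k)$ and derivative-matching $-A\lambda\sin\varphi = -\sin(\pi/2-1/k)\cdot(\text{sign})$; solving and using $1/k$ small, $\sin(1/k)\ge \frac{1}{k} - \frac{1}{6k^3}$, $\cos(1/k)\ge 1 - \frac{1}{2k^2}$ yields a clean lower bound on $A\lambda$ once $\lambda$ is taken of order $k$; the explicit constant $3/16$ just falls out of keeping the elementary inequalities honest for $k\ge 100$. The differential inequalities $f''+f\le 0$ and $f^2 + (f')^2 - (f'(c))^2 \le 0$ hold on the central part because there $f''+f=0$ and $f^2+(f')^2 = 1 \le (f'(c))^2$ (using the slope bound $|f'(c)|\ge 3k/16 > 1$), and on the endpoint window because there $f = A\cos(\lambda(t-t_0)+\varphi)$ with $\lambda \ge 1$ gives $f'' = -\lambda^2 f \le -f$ (as $f\ge 0$), while $f^2 + (f')^2 = A^2\cos^2 + A^2\lambda^2\sin^2 \le A^2\lambda^2 = (f'(c))^2$.

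The length bound $c\ge \pi/2 - 1/k + 1/(4k^2)$ is a computation with the endpoint window: the window has left end $\pi/2 - 1/k$ and right end $c$, where $c$ is the first zero of $A\cos(\lambda(t-t_0)+\varphi)$ past $t_0$, i.e. $\lambda(c - t_0) = \pi/2 - \varphi$, so $c - t_0 = (\pi/2 - \varphi)/\lambda$. I then need $(\pi/2-\varphi)/\lambda \ge 1/(4k^2)$, i.e. the window is not too short; since $\varphi$ is small (of order $1/k$) and $\lambda$ of order $k$, this is $\approx (\pi/2)/k \gg 1/(4k^2)$, comfortably true. One must of course also ensure $c\le$ something and that $f>0$ strictly on $(-c,c)$ (so that the metric is a genuine warped-product sphere metric degenerating only at the two poles) — this is built into choosing $c$ as the \emph{first} zero. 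I expect the main obstacle to be bookkeeping: juggling the value-matching, derivative-matching, first-zero, and slope-lower-bound conditions simultaneously while keeping every inequality explicit enough to produce the stated numerical constants for all $k\ge 100$; the geometric content is elementary, but one has to be careful that the glued ODE really does keep $f$ concave and positive on the whole window and that no even derivative is accidentally nonzero at $c$ after the rescaling. Smoothness at the poles in the sense of Lemma \ref{lem smooth metric criterion} is \emph{not} claimed here (the statement explicitly allows the two points $t=\pm c$ to be singular); that rescaling-to-smoothness step is the job of the subsequent lemma, not this one.
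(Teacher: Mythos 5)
Your overall strategy is close in spirit to the paper's (Remark \ref{rem key points of construction}), but the specific device you chose for the construction has a genuine defect that the paper's device avoids. The paper defines $f$ as the solution of the \emph{inhomogeneous} linear ODE $f''+f=h(t)$ with $f(0)=1$, $f'(0)=0$, where $h\leq 0$ is a fixed smooth bump function supported in a short window just past $t=\frac{\pi}{2}-\frac{1}{k}$; this keeps $f$ globally smooth by the basic theory of linear ODE with smooth coefficients, forces $f=\cos t$ on $[0,\frac{\pi}{2}-\frac{1}{k}]$ by uniqueness, and gives the two differential inequalities and $f^{(\mathrm{even})}(c)=0$ almost for free (the latter from the recursion $f^{(m+2)}(c)+f^{(m)}(c)=h^{(m)}(c)=0$, since $c$ lies past the support of $h$; the former by integrating $2f'(s)h(s)\le 0$). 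Your proposal instead glues $\cos t$ on $[0,\frac{\pi}{2}-\frac{1}{k}]$ to an explicit rescaled cosine $A\cos\big(\lambda(t-t_0)+\varphi\big)$ on the window $[t_0,c]$, $t_0=\frac{\pi}{2}-\frac{1}{k}$, with $C^1$ matching. But this is only $C^1$: from the left $f''(t_0^-)=-\sin(\frac{1}{k})$, while from the right $f''(t_0^+)=-\lambda^2 A\cos\varphi=-\lambda^2\sin(\frac{1}{k})$, and with $\lambda$ of order $k^2$ (which your own arithmetic shows is what you need to get $A\lambda\ge \frac{3k}{16}$) these differ by a factor $\lambda^2$. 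So $f$ is not $C^2$ at $\pm t_0$, hence $g_f$ is not smooth at the interior level spheres $t=\pm\big(\frac{\pi}{2}-\frac{1}{k}\big)$, which contradicts the claim that $(S^n,g_f)$ is smooth except at $t=\pm c$. This is not a bookkeeping point but a missing idea: you need a construction in which the transition happens through a \emph{smooth} datum, which is exactly what the paper's smooth bump $h$ accomplishes.

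Your alternative instinct, to solve $f''+h(t)f=0$ with a smooth $h\ge 1$ equal to $1$ near $0$ and to $\lambda^2$ near $c$, would give a globally smooth $f$ and could in principle be made to work; but then the explicit rescaled-cosine formula only holds where $h$ is already constant, and the value/derivative matching, the lower bound on $c$, and the verification that $|f'(c)|\gtrsim k$ all have to be redone qualitatively (e.g.\ by Gronwall or monotonicity arguments), rather than read off a closed form. By comparison the inhomogeneous formulation $f''+f=h$ with $h\le 0$ is cleaner: $-f''/f=1-h/f\ge1$ automatically wherever $f>0$; $f''+f=h\le0$ is immediate; $f^2+(f')^2-(f'(c))^2=-\int_t^c 2f'(s)h(s)\,ds\le0$ since $f'\le0$ and $h\le0$; and the even-derivative vanishing at $c$ follows from the linear recursion without any trigonometric identity. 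The rest of your proposal (curvature $\equiv1$ on the central part, sign analysis of $\varphi$, the inequality $f^2+(f')^2\le1\le(f'(c))^2$ centrally) is correct as far as it goes, but it all rests on the piecewise formula that fails to be smooth at the gluing sphere, so the argument as written does not establish the lemma.
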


\begin{remark}\label{rem about singularity and negative curv}
	{Because of (\ref{singular 1st deri}) and Lemma \ref{lem smooth metric criterion}, we know that $g_f$ is not a smooth metric at $t= \pm c$. 
		
		From our argument below, the sectional curvature $K(g_f)\geq 0$ does not hold on $(S^n, g_f)$ for all $t\in (-c, c)$, because $1- |f'(t)|^2< 0$ for $t$ is close to $\pm c$.  
	}
\end{remark}

\pf
{\textbf{Step (1)}. We define the smooth function $\varphi(t)\vcentcolon=\frac{\int_{-\infty}^{t}F(s)ds}{\int_{-\infty}^{\infty}F(s)ds}$, where 
	\begin{align}\nonumber 
		F(t)\vcentcolon =\left\{
		\begin{array}{rl}
			e^{\frac{1}{t(t-1)}},0<t<1,\\
			0,\quad \text{otherwise}.
		\end{array} \right.
	\end{align}
	
	Denote $c_0=\frac{1}{4}$ and $\delta= \frac{c_0}{4}= \frac{1}{16}$ in the rest argument. Now we define a smooth function $h:[0, \frac{\pi}{2}]\rightarrow \overline{\mathbb{R}^{-}}$ as follows (see Figure \ref{figure: h}): 
	\begin{equation}\nonumber 
		h(t)= \left\{
		\begin{array}{rl}
			&\varphi(\frac{t- (\frac{\pi}{2}- \frac{1}{k})}{k^{-2}\delta})\cdot \Big(-\delta^{-1}k^{5}\cdot (t- \frac{\pi}{2}+ \frac{1}{k})\Big) \ ,  \quad \quad \quad \quad t\in [0, \frac{\pi}{2}- \frac{1}{k}+ \frac{3}{2}\cdot \frac{\delta}{k^{2}}], \\
			&\Big(1- \varphi(\frac{t- (\frac{\pi}{2}- \frac{1}{k}+ 2\frac{\delta}{k^{2}})}{k^{-2}\delta})\Big)\cdot \Big(-\delta^{-1}k^{5}\cdot (t- \frac{\pi}{2}+ \frac{1}{k})\Big)\ ,   \quad \quad t\in [\frac{\pi}{2}- \frac{1}{k}+ \frac{3}{2}\cdot \frac{\delta}{k^{2}}, \frac{\pi}{2}]. 
		\end{array} \right.
	\end{equation}
	
	\begin{figure}[H]
		\centering
		\includegraphics[height=12cm,width=15cm]{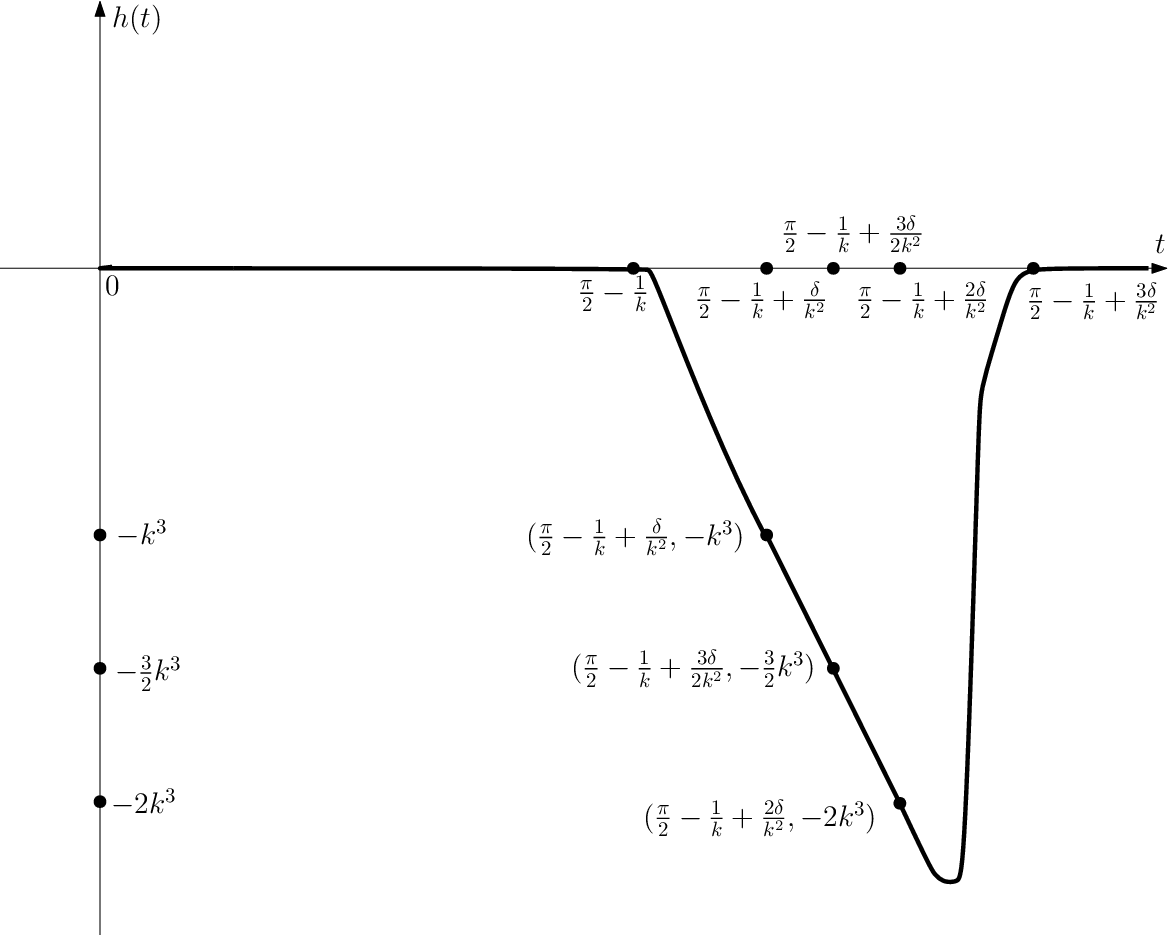}
		\caption{The figure of h(t)}
		\label{figure: h}
	\end{figure}
	
	We assume $f$ is the solution to the following $2$nd order ODE:
	\begin{align}
		f''(t)+ f(t)= h(t), \quad \quad f(0)= 1, \quad \quad f'(0)= 0, \quad \quad \quad t\in [0, \frac{\pi}{2}]. \label{ODE for f}
	\end{align}

Let $c= \min\{t> 0: f(t)= 0\}$, from Bonnet-Myers theorem, $c\in(\frac{\pi}{2}-\frac{1}{k},\frac{\pi}{2})$.

Note when $f(t)> 0$, we always have $f''(t)< 0$. From (\ref{ODE for f}), we know that $f''<0$ in $[0, c]$ and $f'(0)=0$. Then $f'(t)$ is decreasing and less than a negative number when $t\in(0, c)$.

So $f$ decreases to 0 in finite time $[0,c]$. 

Now we have $f(c)= 0$ and $f(t)> 0$ for any $t\in (0, c)$ (see Figure \ref{figure: f}).
	
	\begin{figure}[H]
		\centering
		\includegraphics[height=12cm,width=15cm]{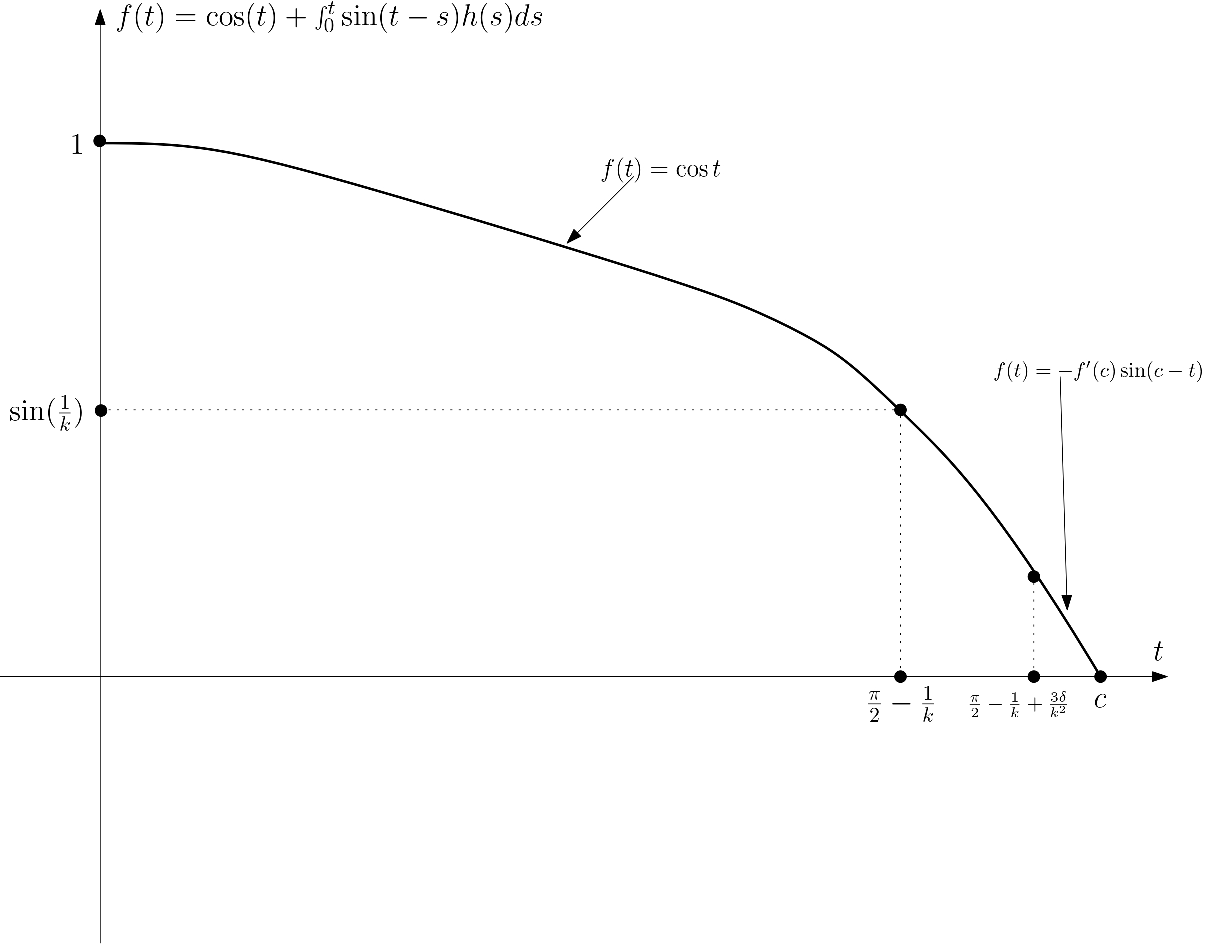}
		\caption{The figure of f(t)}
		\label{figure: f}
	\end{figure}
	
	\textbf{Step (2)}. 
	From the uniqueness of solution to ODE, we know that $f(t)= \cos t$ for any $t\in [0, \frac{\pi}{2}- \frac{1}{k}]$. Now on $[\frac{\pi}{2}- \frac{1}{k}, c)$, we have 
	\begin{align}
		f''(t)= -f(t)+ h(t)\geq -\sin(\frac{1}{k})- 3k^3 \nonumber 
	\end{align}
	
	Then for $t\in [\frac{\pi}{2}- \frac{1}{k}, c)$, we get
	\begin{align}
		f'(t)= f'(\frac{\pi}{2}- \frac{1}{k})+ \int_{\frac{\pi}{2}- \frac{1}{k}}^t f''(s)ds\geq -1- (\sin(\frac{1}{k})+ 3k^3)\cdot (t- [\frac{\pi}{2}- \frac{1}{k}] ). \nonumber 
	\end{align}
	By Newton-Leibniz formula again, let $\tilde{t}= t- [\frac{\pi}{2}- \frac{1}{k}]$, we get
	\begin{align}
		f(t)= f(\frac{\pi}{2}- \frac{1}{k})+ \int_{\frac{\pi}{2}- \frac{1}{k}}^t f'(s)ds\geq \sin(\frac{1}{k})- \tilde{t}- \frac{\tilde{t}^2}{2}(\sin(\frac{1}{k})+ 3k^3). \nonumber 
	\end{align}
	If $\displaystyle 0\le\tilde{t}<\frac{-1+\sqrt{1+2\sin(\frac{1}{k})(\sin(\frac{1}{k})+ 3k^3)}}{\sin(\frac{1}{k})+ 3k^3}$, we have  $f(t)>0$. 
	
	For any $k\geq 1$, using $\frac{\sin x}{x}\in [\frac{2}{\pi}, 2]$ where $x\in (0, \frac{\pi}{2})$, we obtain 
	\begin{align}
		&\frac{-1+\sqrt{1+2\sin(\frac{1}{k})(\sin(\frac{1}{k})+ 3k^3)}}{\sin(\frac{1}{k})+ 3k^3}k^2\geq \frac{-1+\sqrt{1+2\cdot \frac{2}{\pi}\cdot \frac{1}{k}\cdot  3k^3}}{1+ 3k^3}k^2 \nonumber \\
		&\geq \frac{-1+ 2k}{1+ 3k}\geq \frac{1}{4}= c_0. \nonumber 
	\end{align}
	
	Therefore $f(t)> 0$ if $\tilde{t}\leq c_0k^{-2}$, and
	\begin{align}
		c\geq (\frac{\pi}{2}- \frac{1}{k})+ c_0k^{-2} =\frac{\pi}{2}- \frac{1}{k}+ \frac{4\delta}{k^2}. \label{lower bound of c intrinsic diam}
	\end{align}
	
	By the definition of $h(t),$ we have $h^{(m)}(c)=0$, for any $m\in\mathbb{N}.$ So $f''(c)+f(c)=h(c)=0.$ Since $f(c)=0$, then $f''(c)=0.$ By $f^{(m+2)}(c)+f^{(m)}(c)=h^{(m)}(c)$, we have $f^{(even)}(c)=0.$
	
	For any $t\in [0, c]$, using Newton-Leibniz formula and $f(c)= 0, f''(s)= h(s)- f(s), f'(s)\leq 0$, we have
	\begin{align}
		& f^2(t)+ (f')^2(t)- (f'(c))^2= f^2(t)- f^2(c)- \int_t^c 2f'(s)f''(s)ds \nonumber \\
		&=\int_{c}^{t}2f(s)f'(s)+2f'(s)f''(s)ds=-\int_{t}^{c}2f'(s)h(s)ds\le 0. \label{curv ineq for higher dim}
	\end{align}
	
	By the decreasing property of $f'$ on $(0, c)$, we get 
	\begin{align}
		f'(c)&\leq f'((\frac{\pi}{2}- \frac{1}{k})+ c_0k^{-2})= f'(\frac{\pi}{2}- \frac{1}{k})+ \int_{\frac{\pi}{2}- \frac{1}{k}}^{(\frac{\pi}{2}- \frac{1}{k})+ c_0k^{-2}} f''(s)ds \nonumber \\
		&\leq \int_{\frac{\pi}{2}- \frac{1}{k}+ \delta\cdot k^{-2}}^{(\frac{\pi}{2}- \frac{1}{k})+ 4\delta k^{-2}} -k^3ds= -3\delta k. \label{crucial bound of f'(c)}
	\end{align}
}
\qed

\begin{remark}\label{rem round off singularity}
	{To ``round off" those two singularities $t= \pm c$, we only need to scale metric on $d\theta^2$ factor by $\frac{1}{|f'(c)|}= \frac{1}{|f'(-c)|}$; which is done in the argument of Proposition \ref{prop max extrin diam examples-smooth in higer dimension}. 
	}
\end{remark}

\begin{lemma}\label{lem hypersurface and wraped metric}
	{For a smooth function $f: [-c, c]\rightarrow \mathbb{R}$ with 
\begin{align}
f\big|_{(-c, c)}> 0, \quad \quad |f'|\leq 1, \quad \quad  f^{(even)}(\pm c)= 0, \quad \quad f'(\pm 1)= 0;\nonumber 
\end{align}	
define a Riemannian manifold $(S^n, g_f)$ with $g_f= dt^2+ f^2(t)d\theta^2$, where $t\in [-c, c]$, $d\theta^2$ is the canonical metric of $\mathbb{S}^{n-1}$, and $\theta_1, \cdots, \theta_{n- 2}\in [0, \pi], \theta_{n- 1}\in [0, 2\pi]$ is the coordinate system of $\mathbb{S}^{n- 1}$. Then there is $\mathscr{I}_f\in \mathcal{IE}((S^n,g),  \mathbb{R}^{n+1})$, where 
	\begin{align}
	& \mathscr{I}_f(t, \theta_1, \cdots, \theta_{n- 1}) \nonumber \\
&	= (f(t)\cos\theta_1,f(t)\sin\theta_1\cos\theta_2,..., f(t)\cos \theta_{n- 1}\prod_{i= 1}^{n- 2}\sin\theta_i, \nonumber \\
&\quad \quad f(t)\sin \theta_{n- 1}\prod_{i= 1}^{n- 2}\sin\theta_i,\int_{0}^{t}\sqrt{1-f^{\prime2}(s)}ds). \nonumber 
	\end{align}
	}
\end{lemma}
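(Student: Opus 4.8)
The map $\mathscr{I}_f$ presents $(S^n,g_f)$ as a rotationally symmetric hypersurface of $\mathbb{R}^{n+1}=\mathbb{R}^n\times\mathbb{R}$. Writing $\omega(\theta)=(\cos\theta_1,\sin\theta_1\cos\theta_2,\dots,\sin\theta_{n-1}\prod_{i=1}^{n-2}\sin\theta_i)\in\mathbb{S}^{n-1}\subset\mathbb{R}^n$ for the standard spherical parametrization of the unit sphere and $\psi(t)=\int_0^t\sqrt{1-(f')^2(s)}\,ds$ (real-valued since $|f'|\le1$), we have $\mathscr{I}_f(t,\theta)=(f(t)\,\omega(\theta),\,\psi(t))$. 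The plan is to verify the three properties that characterize an element of $\mathcal{IE}((S^n,g_f),\mathbb{R}^{n+1})$: (i) $\mathscr{I}_f$ descends to a well-defined smooth map on $S^n$; (ii) $\mathscr{I}_f^{*}\langle\cdot,\cdot\rangle_{\mathbb{R}^{n+1}}=g_f$; and (iii) $\mathscr{I}_f$ is injective. Then (ii) shows $\mathscr{I}_f$ is an immersion, so together with (i), (iii) and compactness of $S^n$ it is an embedding. For the first half of (i): $\mathscr{I}_f(\pm c,\theta)=(0,\psi(\pm c))$ does not depend on $\theta$, so $\mathscr{I}_f$ is constant on the two boundary spheres collapsed in forming $S^n$ and hence descends; and by Lemma \ref{lem smooth metric criterion} the hypotheses on $f$ at $t=\pm c$ guarantee that $g_f$ is a genuine smooth Riemannian metric on $S^n$.

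For (ii), work on $S^n$ minus the two poles $p_\pm$ (the images of the collapsed spheres), where $(t,\theta)$ are valid coordinates. Using $|\omega|^2\equiv1$ — hence $\langle\omega,\partial_{\theta_i}\omega\rangle=\tfrac12\partial_{\theta_i}|\omega|^2=0$ — and $\langle\partial_{\theta_i}\omega,\partial_{\theta_j}\omega\rangle=(d\theta^2)_{ij}$ (the round metric on $\mathbb{S}^{n-1}$), one computes from $\partial_t\mathscr{I}_f=(f'\omega,\sqrt{1-(f')^2})$ and $\partial_{\theta_i}\mathscr{I}_f=(f\,\partial_{\theta_i}\omega,0)$ that
\begin{align}
|\partial_t\mathscr{I}_f|^2=(f')^2+\bigl(1-(f')^2\bigr)=1,\qquad \langle\partial_t\mathscr{I}_f,\partial_{\theta_i}\mathscr{I}_f\rangle=ff'\langle\omega,\partial_{\theta_i}\omega\rangle=0,\nonumber
\end{align}
and $\langle\partial_{\theta_i}\mathscr{I}_f,\partial_{\theta_j}\mathscr{I}_f\rangle=f^2(d\theta^2)_{ij}$. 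Thus the pullback metric equals $dt^2+f^2\,d\theta^2=g_f$ on $S^n\setminus\{p_\pm\}$, and hence on all of $S^n$ by continuity once (i) is known.

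The main obstacle is the smoothness of $\mathscr{I}_f$ at the poles $p_\pm$ asserted in (i). Away from $\{p_\pm\}$ it is immediate, since there $\mathscr{I}_f$ is the composition $(t,q)\mapsto(f(t)q,\psi(t))$ of the smooth functions $f,\psi$ with the smooth inclusion $\mathbb{S}^{n-1}\hookrightarrow\mathbb{R}^n$ (the coordinate degeneracies of the $\theta_i$ at poles of $\mathbb{S}^{n-1}$ being irrelevant). Near $p_+$ I would introduce Euclidean-type coordinates $u\in\mathbb{R}^n$ characterized by $|u|=c-t$ and $u/|u|=\omega(\theta)$; these are smooth coordinates precisely because $g_f$ is smooth at $p_+$. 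Set $\phi(s):=f(c-s)$ for small $s\ge0$: since $\phi(0)=0$ and every even-order derivative of $\phi$ at $0$ vanishes (being $\pm f^{(even)}(c)=0$), $\phi$ extends to a smooth \emph{odd} function with $\phi'(0)=|f'(c)|=1$, so both $\phi(s)/s$ and $\phi'(s)$ extend to smooth functions of $s^2$. Then the first $n$ components of $\mathscr{I}_f$ become $f(c-|u|)\cdot\tfrac{u}{|u|}=\bigl(\phi(|u|)/|u|\bigr)u$, a smooth function of $u$; and the last component is $\psi(c)-\int_0^{|u|}\sqrt{1-\phi'(\sigma)^2}\,d\sigma$, where $1-\phi'(\sigma)^2$ is a smooth function of $\sigma^2$ vanishing at $\sigma=0$, so that $\sqrt{1-\phi'(\sigma)^2}=\sigma\,H(\sigma^2)$ with $H$ smooth — the one point needing care being that this vanishing is of order exactly two (automatic under $|f'|\le1$ together with the non-degeneracy enjoyed by the functions $f$ of Lemma \ref{lem big extrin diam examples with singularity}), which makes the last component $\psi(c)-\tfrac12\int_0^{|u|^2}H(v)\,dv$, again smooth in $u$. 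The pole $p_-$ is identical, completing (i).

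For (iii), observe that the image of $\mathscr{I}_f$ is the union over $t\in[-c,c]$ of the round spheres $S_t=\{(x',x_{n+1})\in\mathbb{R}^n\times\mathbb{R}:x_{n+1}=\psi(t),\ |x'|=f(t)\}$ (a single point when $t=\pm c$). These are pairwise disjoint: if $\psi(t_1)\ne\psi(t_2)$ they lie in distinct hyperplanes; if $\psi(t_1)=\psi(t_2)$ with $t_1<t_2$ then $\psi'\equiv0$, hence $|f'|\equiv1$, on $[t_1,t_2]$, so by continuity $f'\equiv+1$ or $f'\equiv-1$ there, $f$ is strictly monotone on $[t_1,t_2]$, and (using $f>0$ on $(-c,c)$) $f(t_1)\ne f(t_2)$, giving $S_{t_1}\cap S_{t_2}=\emptyset$. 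Since moreover $\mathbb{S}^{n-1}\to S_t$ is injective whenever $f(t)>0$, the map $\mathscr{I}_f$ is injective. Combining (i), (ii), (iii) with compactness of $S^n$ yields that $\mathscr{I}_f$ is a smooth isometric embedding, i.e.\ $\mathscr{I}_f\in\mathcal{IE}((S^n,g_f),\mathbb{R}^{n+1})$. I expect the metric identity and the injectivity check to be short, and the smoothness at the poles to be the only genuinely delicate point.
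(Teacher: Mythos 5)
The paper's entire ``proof'' of this lemma is the single sentence ``It is trivial,'' so there is no argument in the paper to compare against; your write-up is a genuine verification that the paper omits. Your decomposition into (i) well-definedness/smoothness, (ii) the pullback computation, and (iii) injectivity is the right structure, the metric computation in (ii) is correct, and the injectivity argument in (iii) via the slice spheres $S_t$ is clean and correct. Your treatment of the first $n$ components at the poles, writing $f(c-|u|)\,u/|u|=\bigl(\phi(|u|)/|u|\bigr)u$ with $\phi$ odd so that $\phi(s)/s$ is a smooth even function of $s$, is also right. Incidentally, the hypothesis ``$f'(\pm 1)=0$'' in the statement is plainly a typo (presumably for $f'(\pm c)=\mp 1$, equivalently $|f'(\pm c)|=1$, as Lemma \ref{lem smooth metric criterion} requires), and you interpreted it correctly.

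The one place I would push back is the parenthetical claim that the order-two vanishing of $1-\phi'(\sigma)^2$ at $\sigma=0$ is ``automatic under $|f'|\le 1$.'' It is not: $|f'|\le 1$ only forces the leading Taylor coefficient to have the right sign, not to be nonzero. Since $\phi'$ is even with $\phi'(0)=1$, one has $1-\phi'(\sigma)^2=-2b\sigma^2+O(\sigma^4)$ with $b=\tfrac12\phi'''(0)=-\tfrac12 f'''(c)$, and $|f'|\le 1$ gives $b\le 0$ but permits $b=0$. If, for instance, $f'''(c)=0$ and $f^{(5)}(c)\ne 0$ (which is compatible with every stated hypothesis including $f^{(\mathrm{even})}(\pm c)=0$ and $|f'|\le 1$), then $1-\phi'(\sigma)^2\sim\mathrm{const}\cdot\sigma^4$, so $\sqrt{1-\phi'(\sigma)^2}\sim\mathrm{const}\cdot\sigma^2$ and the last component $\psi(c)-\int_0^{|u|}\sqrt{1-\phi'(\sigma)^2}\,d\sigma\sim\psi(c)-\mathrm{const}\cdot|u|^3$ is only $C^2$, not $C^\infty$, at $u=0$. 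So this is not merely a point of care in your argument: the lemma as literally stated is false without an extra non-degeneracy hypothesis on $f$ at $\pm c$ (for example $f'''(\pm c)\ne 0$, or more generally that $1-(f')^2$ has even vanishing order $2m$ with $m$ odd). You are right that the specific functions produced by Lemma \ref{lem big extrin diam examples with singularity} and rescaled in Proposition \ref{prop max extrin diam examples-smooth in higer dimension} do satisfy this — there $\tilde{f}'''(c)=-\tilde f'(c)=1\ne 0$ because $h$ and all its derivatives vanish at $c$ — so the lemma is correct in the only place the paper uses it. But a fully honest version of the statement should record the extra hypothesis, and your proof is the one that actually surfaces the issue the paper glosses over.
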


\pf
{It is trivial.
}
\qed	

\begin{remark}\label{rem not always positive curved}
	{Some part of the Riemannian manifold $(S^n, g_f)$ (from Lemma \ref{lem big extrin diam examples with singularity}) with $|f'|> 1$,  can not be isometrically embedded into $\mathbb{R}^{n+ 1}$ by the isometric embedding $\mathscr{I}$ in Lemma \ref{lem hypersurface and wraped metric}.
	}
\end{remark}

\begin{prop}\label{prop max extrin diam examples-smooth in higer dimension}
	For any $\epsilon> 0,$ there exists a smooth hypersurface $(S^n, g)$ with $K(g)\geq 1$ with $\mathscr{I}\in\mathcal{IE}((S^n,g),  \mathbb{R}^{n+1})$, such that $\mathrm{Diam}_{\mathscr{I}}(S^n,g)\ge\pi-\epsilon$ . 
\end{prop}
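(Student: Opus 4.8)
The plan is to start from the singular model $(S^n, g_f)$ produced by Lemma \ref{lem big extrin diam examples with singularity} (for a parameter $k\ge 100$ to be fixed large at the very end) and to ``round off'' its two conical singularities at $t= \pm c$ by rescaling the warping function. Put $\lambda\vcentcolon= |f'(c)|= |f'(-c)|\ge \tfrac{3k}{16}$, and set $\tilde f\vcentcolon= \lambda^{-1}f$ and $\tilde g\vcentcolon= dt^2+ \tilde f^2(t)\, d\theta^2$ on $S^n$, $t\in [-c, c]$. Then $\tilde f$ is even, positive on $(-c, c)$, $\tilde f^{(\mathrm{even})}(\pm c)= \lambda^{-1}f^{(\mathrm{even})}(\pm c)= 0$, while $\tilde f'(-c)= \lambda^{-1}f'(-c)= 1$ and $\tilde f'(c)= \lambda^{-1}f'(c)= -1$ (recall $f'< 0$ on $(0, c]$, so $f'(-c)= -f'(c)> 0$). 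Reading off the warping function from each pole, so that the geodesic distance to the pole is $t+ c$ near $t= -c$ and $c- t$ near $t= c$, Lemma \ref{lem smooth metric criterion} then shows that $\tilde g$ extends to a genuinely smooth Riemannian metric on the closed manifold $S^n$.

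The second step is to check $K(\tilde g)\ge 1$. On the open dense set $\{-c< t< c\}$ the curvature formulas (\ref{sectional curv formula}) give $K(E_1, X)= -\tilde f''/\tilde f= -f''/f\ge 1$, using $f''+ f\le 0$ and $f> 0$, and $K(X, Y)= (1- (\tilde f')^2)/\tilde f^2= (\lambda^2- (f')^2)/f^2\ge 1$, using $f^2+ (f')^2\le (f'(c))^2= \lambda^2$; both inequalities come from Lemma \ref{lem big extrin diam examples with singularity}. Since $(S^n, \tilde g)$ is now a smooth compact manifold, its sectional curvature is continuous, hence $K(\tilde g)\ge 1$ everywhere because it holds on a dense subset.

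Next I would produce the isometric embedding and estimate the extrinsic diameter. The monotonicity $f''< 0$ on $[0, c]$ (established inside the proof of Lemma \ref{lem big extrin diam examples with singularity}) together with $f'(0)= 0$ forces $|f'|$ to increase from $0$ to $\lambda$ on $[0, c]$, and $f'$ is odd, so $|\tilde f'|\le 1$ on $[-c, c]$. Thus Lemma \ref{lem hypersurface and wraped metric} applies and provides $\mathscr{I}\vcentcolon= \mathscr{I}_{\tilde f}\in \mathcal{IE}((S^n, \tilde g), \mathbb{R}^{n+1})$ with last coordinate $z(t)= \int_0^t \sqrt{1- (\tilde f'(s))^2}\, ds$; its image is a smooth hypersurface of $\mathbb{R}^{n+1}$ since $\mathscr{I}$ is a smooth embedding of the compact manifold $(S^n, \tilde g)$. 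Evaluating at the two poles $p_\pm$ at $t= \pm c$, all first $n$ coordinates of $\mathscr{I}(p_\pm)$ vanish and $z$ is odd (its integrand is even), so
\begin{align}
\mathrm{Diam}_{\mathscr{I}}(S^n, \tilde g)\ \ge\ |\mathscr{I}(p_+)- \mathscr{I}(p_-)|\ =\ 2\int_0^c \sqrt{1- (\tilde f'(s))^2}\, ds. \nonumber
\end{align}
On $[0, \tfrac{\pi}{2}- \tfrac1k]$ one has $f(s)= \cos s$, so $|\tilde f'(s)|= \lambda^{-1}\sin s\le \lambda^{-1}\le \tfrac{16}{3k}$, whence $\sqrt{1- (\tilde f')^2}\ge \sqrt{1- (16/3k)^2}$ there; discarding the rest of $[0, c]$ gives
\begin{align}
2\int_0^c \sqrt{1- (\tilde f'(s))^2}\, ds\ \ge\ 2\Big(\frac{\pi}{2}- \frac1k\Big)\sqrt{1- \Big(\frac{16}{3k}\Big)^2}\ \xrightarrow[k\to\infty]{}\ \pi, \nonumber
\end{align}
so it suffices to choose $k$ large enough that the middle quantity exceeds $\pi- \epsilon$, and then take $g= \tilde g$.

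I expect the only genuinely delicate point to be the smoothing step: one has to keep careful track of the sign of $f'$ at each pole so that Lemma \ref{lem smooth metric criterion} really applies, and then observe that the bound $K\ge 1$ survives the passage to the (now regular) poles purely by continuity. Everything else is a direct substitution into the formulas and inequalities already in hand.
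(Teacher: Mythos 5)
Your proposal is correct and follows essentially the same route as the paper: rescale $f$ by $|f'(c)|^{-1}$, invoke Lemma \ref{lem smooth metric criterion} for smoothness of $g_{\tilde f}$ at the two poles, read off $K(g_{\tilde f})\ge 1$ from the two differential inequalities of Lemma \ref{lem big extrin diam examples with singularity} via (\ref{sectional curv formula}), and embed by Lemma \ref{lem hypersurface and wraped metric}. The only cosmetic difference is the final integral bound: you discard $[\tfrac{\pi}{2}-\tfrac1k, c]$ and use the explicit $f=\cos$ on $[0,\tfrac{\pi}{2}-\tfrac1k]$, whereas the paper keeps the full interval and applies $\sqrt{1-x^2}\ge 1-x$ to get $2c-2\tilde f(0)\ge \pi-\tfrac{20}{k}$; both yield the same conclusion.
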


\pf
{For $k\geq 100$ to be determined later, choose $f$ from Lemma \ref{lem big extrin diam examples with singularity}, let $\tilde{f}(t)=\frac{f(t)}{|f'(c)|}: [-c, c]\rightarrow \mathbb{R}$, then 
	\begin{align}
		&\tilde{f}''+ \tilde{f}\leq 0,\quad \tilde{f}^2+(\tilde{f}')^{2}-1\le0, \label{curv geq 1 assumption} \\
		&	\tilde{f}(0)= \frac{1}{|f'(c)|},  \quad \tilde{f}'(0)= 0,\quad 	\tilde{f}(c)= 0,\quad \tilde{f}'(c)= -1,\quad	\tilde{f}^{(even)}(c)=0.  \label{f satisfy deri assump}
	\end{align}
	
	Consider $(S^n, g_{\tilde{f}})\subseteq \mathbb{R}^{n+ 1}$ defined by $\tilde{f}$ as in Lemma \ref{lem hypersurface and wraped metric}, where $g_{\tilde{f}}= dt^2+ \tilde{f}^2(t)d\theta^2$. Then $g_{\tilde{f}}$ is smooth by (\ref{f satisfy deri assump}) and Lemma \ref{lem smooth metric criterion} (see Figure \ref{figure: scaling surfaces}, where $|f'(\pm t_0)|=1$).
	
	\begin{figure}[H]
		\centering
		\includegraphics[width=15cm,height=15cm]{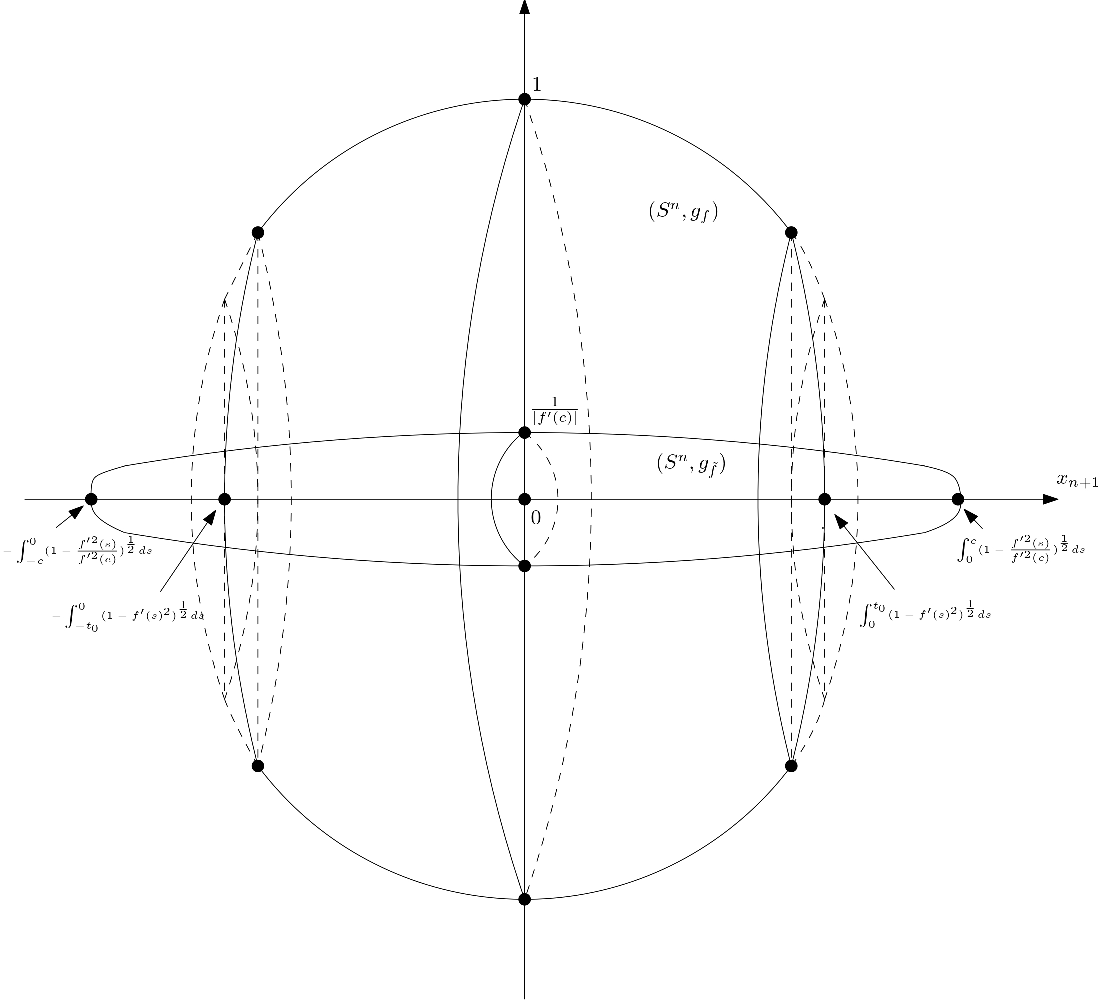}
		\caption{Scaling the slice spheres to smooth the two ends singularities}
		\label{figure: scaling surfaces}
	\end{figure}
	
	From (\ref{curv geq 1 assumption}) and (\ref{sectional curv formula}), we get that $K(g_{\tilde{f}})\ge1$. From $\tilde{f}''\leq 0, \tilde{f}'(0)= 0$, there is $-1\leq \tilde{f}'\big|_{[0, c]}\leq 0$. Using (\ref{lower bound of c intrinsic diam}) and (\ref{crucial bound of f'(c)}), we have
	\begin{align}
		\mathrm{Diam}_{\mathscr{I}_{\tilde{f}}}(S^n, g_{\tilde{f}})&\geq 2\int_{0}^{c}\sqrt{1-\tilde{f}'^2(s)}ds\geq 2\int_{0}^{c}1+\tilde{f}'(s)ds=2c- 2\tilde{f}(0)
		\nonumber \\
		&	\geq \pi- \frac{2}{k}- \frac{2}{|f'(c)|}\geq \pi- \frac{20}{k}. \nonumber 
	\end{align}
	If $k$ is big enough, we get that $\mathrm{Diam}_{\mathscr{I}_{\tilde{f}}}(S^n, g_{\tilde{f}})> \pi- \epsilon$, the conclusion is proved.
}
\qed

\begin{theorem}\label{thm strict ineq for fixed IE}
	{For complete Riemannian manifold $(M^n, g)$ with $Rc\geq (n- 1)$,  we have 
		\begin{align}
			\mathrm{Diam}_\mathscr{I}(M^n,  g)< \pi, \quad \quad \quad \forall  \mathscr{I}\in \mathcal{IE}((M^n, g),  \mathbb{R}^m). \label{strict ineq of diam}
		\end{align}
		Furthermore (\ref{strict ineq of diam}) is sharp in the following sense: there exists a sequence of $(S^n, g_k)$ with $K(g_k)\geq 1$ with $\mathscr{I}_k\in\mathcal{IE}((S^n,g_k),  \mathbb{R}^{n+1})$ and $\displaystyle \lim_{k\rightarrow\infty}\mathrm{Diam}_{\mathscr{I}_k}(S^n, g_k)= \pi$.
		}
\end{theorem}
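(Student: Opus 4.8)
The plan is to obtain the non-strict bound $\mathrm{Diam}_{\mathscr{I}}(M^n,g)\le\pi$ essentially for free from the classical Bonnet-Myers theorem, and then to eliminate the equality case by invoking Cheng's maximal-diameter rigidity together with the elementary fact that a shortest curve between two points of $\mathbb{R}^m$ is a straight segment.

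First I would record the soft inequality. Fix $\mathscr{I}\in\mathcal{IE}((M^n,g),\mathbb{R}^m)$ and $x,y\in M^n$, and let $\gamma_{x,y}$ be a minimizing geodesic in $(M^n,g)$. Since an isometric embedding preserves the Riemannian metric it preserves the length of every curve, so
\[
|\mathscr{I}(x)-\mathscr{I}(y)|\le \ell(\mathscr{I}\circ\gamma_{x,y})=\ell(\gamma_{x,y})=d_g(x,y)\le \mathrm{Diam}_g(M^n)\le\pi,
\]
where the last step is Bonnet-Myers (which also makes $M^n$ compact). Taking the supremum over $x,y$ yields $\mathrm{Diam}_{\mathscr{I}}(M^n,g)\le\pi$.

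Next I would rule out equality. Suppose $\mathrm{Diam}_{\mathscr{I}}(M^n,g)=\pi$. By compactness the continuous function $(x,y)\mapsto|\mathscr{I}(x)-\mathscr{I}(y)|$ attains its maximum, so there are $p\ne q$ with $|\mathscr{I}(p)-\mathscr{I}(q)|=\pi$; the displayed chain then forces $d_g(p,q)=\pi=\mathrm{Diam}_g(M^n)$, and Cheng's rigidity theorem identifies $(M^n,g)$ with the round unit sphere $\mathbb{S}^n$. Then $q$ is the antipode of $p$, and since $n\ge 2$ one can pick two distinct unit-speed minimizing geodesics $\gamma_1,\gamma_2\colon[0,\pi]\to\mathbb{S}^n$ from $p$ to $q$ (great semicircles with different initial directions) whose images meet only at $p$ and $q$. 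For each $j$ the curve $\mathscr{I}\circ\gamma_j$ has unit speed and length $\pi$ and joins $\mathscr{I}(p)$ to $\mathscr{I}(q)$, which lie at Euclidean distance $\pi$; hence $\mathscr{I}\circ\gamma_j$ is precisely the segment $\overline{\mathscr{I}(p)\mathscr{I}(q)}$ traversed at unit speed, so $\mathscr{I}(\gamma_1(\pi/2))=\mathscr{I}(\gamma_2(\pi/2))$ is the midpoint of that segment. Injectivity of $\mathscr{I}$ gives $\gamma_1(\pi/2)=\gamma_2(\pi/2)$, contradicting the disjointness of the two open geodesic arcs. Hence $\mathrm{Diam}_{\mathscr{I}}(M^n,g)<\pi$.

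For the sharpness clause I would just apply Proposition \ref{prop max extrin diam examples-smooth in higer dimension} with $\epsilon=\tfrac1k$: it produces $(S^n,g_k)$ with $K(g_k)\ge1$ and $\mathscr{I}_k\in\mathcal{IE}((S^n,g_k),\mathbb{R}^{n+1})$ satisfying $\pi-\tfrac1k\le\mathrm{Diam}_{\mathscr{I}_k}(S^n,g_k)<\pi$ (the upper bound from the part already proved, valid since $K(g_k)\ge1$ implies $Rc(g_k)\ge n-1$), whence $\mathrm{Diam}_{\mathscr{I}_k}(S^n,g_k)\to\pi$. The one genuinely nontrivial external input is Cheng's theorem — I do not see how to exclude $d_g(p,q)=\pi$ without the rigidity of the intrinsic Bonnet-Myers theorem — so the whole strictness statement rests on that; everything after its invocation is elementary. (One could instead differentiate: if $\mathscr{I}\circ\gamma$ were a straight segment, the second fundamental form would satisfy $\mathrm{II}(\dot\gamma,\dot\gamma)\equiv0$ along $\gamma$, and the Gauss equation would give $Rc(\dot\gamma,\dot\gamma)=-\sum_i|\mathrm{II}(\dot\gamma,e_i)|^2\le0$, contradicting $Rc\ge n-1>0$; but the route through Cheng is cleaner to state.)
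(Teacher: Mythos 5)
Your main proof is correct, but it takes a genuinely different route from the paper. After reducing to $d_g(p,q)=\pi$ exactly as you do, the paper does \emph{not} invoke Cheng's rigidity theorem: it runs the argument you mention parenthetically at the end. Namely, it observes that the minimizing geodesic $\gamma$ in $M^n$ from $p$ to $q$ must be mapped by $\mathscr{I}$ onto the straight segment $\overline{\mathscr{I}(p)\mathscr{I}(q)}$ (since otherwise $d_g(p,q)>|\mathscr{I}(p)-\mathscr{I}(q)|=\pi$), whence $S(\gamma',\gamma')\equiv 0$; the Gauss equation then forces $Rc(\gamma',\gamma')=-\sum_i|S(\gamma',e_i)|^2\le 0$, contradicting $Rc\ge n-1>0$. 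So the "alternative" you sketched at the end is exactly the paper's argument. Comparing the two: your Cheng-based route uses a much stronger intrinsic rigidity result as a black box, and then exploits the global topology of $\mathbb{S}^n$ (multiplicity of minimizing geodesics between antipodes, which is why you need $n\ge 2$ and injectivity of $\mathscr{I}$). The paper's route is more elementary and more local: it never needs to identify $(M^n,g)$ with the round sphere, needs only one minimizing geodesic rather than two, and gets the contradiction from a single pointwise Gauss-equation computation along that geodesic. The Gauss-equation route is therefore the lighter tool here, though both are valid; your remark that one cannot exclude $d_g(p,q)=\pi$ without Cheng is not quite right, since the Gauss equation gives a contradiction directly without ever ruling that equality out a priori. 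The sharpness part of your proposal (applying Proposition \ref{prop max extrin diam examples-smooth in higer dimension} with $\epsilon=1/k$) matches the paper.
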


\pf
{We prove (\ref{strict ineq of diam}) by contradiction, assume $\mathrm{Diam}_\mathscr{I}(M^n,  g)=\pi$ for some $\mathscr{I}\in \mathcal{IE}((M^n, g),  \mathbb{R}^m)$.  
	
	From Bonnet-Myers' Theorem and $Rc(g)\geq n-1$,  we have
	\begin{align}
		\mathrm{Diam}_{g}(M^n)\leq \pi. \nonumber 
	\end{align}
	Therefore $\mathrm{Diam}_\mathscr{I}(M^n,  g)\leq \mathrm{Diam}_{g}(M^n)\leq \pi$. 
	
By $\mathrm{Diam}_\mathscr{I}(M^n,  g)=\pi$,  we have $\mathrm{Diam}_{g}(M^n)=\pi.$ Assume for $p, q\in M^n$,  we have 
	\begin{align}
	|\mathscr{I}(p)- \mathscr{I}(q)|_{\mathbb{R}^m}= \pi.  \nonumber 
	\end{align}	
	
	Denote the unit speed geodesic segment from $\mathscr{I}(p)$ to $\mathscr{I}(q)$ in $(\mathscr{I}(M^n),  g)$ as $\gamma_{\mathscr{I}(p), \mathscr{I}(q)}$. If $\gamma_{\mathscr{I}(p), \mathscr{I}(q)}$ is not a line segment in $\mathbb{R}^m$,  then 
	\begin{align}
		d_g(p, q)= d_g(\mathscr{I}(p), \mathscr{I}(q))> |\mathscr{I}(p)- \mathscr{I}(q)|_{\mathbb{R}^m}= \pi. \nonumber 
	\end{align}
	This contradicts $\mathrm{Diam}_{g}(M^n)=\pi.$ So $\gamma_{\mathscr{I}(p), \mathscr{I}(q)}$ is a line segment in $\mathbb{R}^m$. 
	
	 Assume $\tilde{\nabla}$ is the Levi-Civita connection of $\mathbb{R}^{m}$, $\nabla$ is the Levi-Civita connection of $(M^n,  g)$. For simplicity, we use $\gamma$ to denote $\gamma_{\mathscr{I}(p), \mathscr{I}(q)}$ in the rest argument. 
	 
	 Choose the parallel unit orthogonal frame $\{e_i\}_{i=1}^n$ along $\gamma(t)$ with $e_1=\gamma'(t)$. By Gauss equation, for $2\leq i\leq n$, we have
	 \begin{align}
	 \widetilde{Rm}(e_1,e_{i},e_1,e_{i})=Rm(e_1,e_{i},e_1,e_{i})-\langle S(e_1,e_1),S(e_i,e_i) \rangle+|S(e_1,e_i)|^2,\label{Gauss formula}
	 \end{align}
where $\widetilde{Rm}$ is the Riemannian curvature of $\mathbb{R}^{m}$, $Rm$ is the Riemannian curvature of $(M^n,  g),$ $S(X,Y)=-(\tilde{\nabla}_XY)^{\perp}$ is the second fundamental form of $(M^n, g)\subseteq \mathbb{R}^m$.
	
	Since $\gamma(t)$ is a line segment in $\mathbb{R}^m$, we have $\tilde{\nabla}_{e_1}e_1=0$, then $S(e_1,e_1)=0.$ Now from (\ref{Gauss formula}) and $\widetilde{Rm}= 0$, we get
\begin{align}
Rc(e_1,e_1)=\sum\limits_{i= 2}^nRm(e_1,e_{i},e_1,e_{i})=\sum\limits_{i= 2}^n-|S(e_1,e_i)|^2\le0
\end{align}
	which contradicts $Rc(g)\ge(n-1).$ 

    Hence $\mathrm{Diam}_{\mathscr{I}}(M^n, g)<\pi.$ The sharpness of (\ref{strict ineq of diam}) follows from Proposition \ref{prop max extrin diam examples-smooth in higer dimension}. 
}
\qed

\section{The height of triangles}\label{sec height}

A geodesic \textbf{hinge} $(c_1, c_0, \alpha)$ in $(M^n, g)$ consists of two nonconstant geodesic segments $c_1, c_0$ with the same initial point making the angle $\alpha$. A geodesic segment $c$ between the endpoints of $c_1$ and $c_0$ is called a \textbf{closing edge} of the hinge. We recall the Toponogov's theorem (see \cite{CE}) as follows.
\begin{theorem}[Toponogov]\label{thm Toponogov}
Let $(M^n, g)$ be a complete Riemannian manifold with $K(g)\geq 1$. Let $(c_0, c_1, \alpha)$ be a hinge in $M$ and $c$ a closing edge. Then the closing edge $\tilde{c}$ of any hinge $(\tilde{c}_0, \tilde{c}_1, \alpha)$ in $\mathbb{S}^n$ with $\ell(\tilde{c}_i)= \ell(c_i), i= 0, 1$, satisfies $\ell(\tilde{c})\geq \ell(c)$.
\end{theorem}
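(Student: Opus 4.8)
The plan is to run the classical proof of the Toponogov hinge comparison. I would fix the hinge $(c_0,c_1,\alpha)$ with common vertex $p=c_0(0)=c_1(0)$, set $q=c_0(\ell(c_0))$, parametrize $c_1$ by arclength on $[0,\ell(c_1)]$, and assume (as is standard for the hinge version) that $c_0$ is minimizing and, by Bonnet--Myers, that $\ell(c_i)\le\pi$. Put $\rho:=d_g(q,\cdot)$, $h(t):=\rho(c_1(t))$, and let $\tilde h$ be the analogous function for the model hinge $(\tilde c_0,\tilde c_1,\alpha)$ in $\mathbb S^n$. Because all the relevant data involves only the single point $q$ and the single geodesic $c_1$, the comparison is effectively two-dimensional, and $\tilde h$ is given by the spherical law of cosines, so $\tilde u:=\cos\tilde h$ satisfies $\tilde u(t)=\cos\ell(c_0)\cos t+\sin\ell(c_0)\sin t\cos\alpha$ and hence the linear ODE $\tilde u''+\tilde u=0$. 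Since $h\le\pi$ by Bonnet--Myers, the model closing edge has length $\le\pi$, and $\cos$ is decreasing on $[0,\pi]$, the desired inequality $\ell(\tilde c)\ge\ell(c)$, i.e.\ $\tilde h(\ell(c_1))\ge h(\ell(c_1))$, reduces to showing $u\ge\tilde u$ on $[0,\ell(c_1)]$ for $u:=\cos h$.

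The first step is to match initial data. As $c_0$ is a unit-speed minimizing geodesic from $p$ to $q$, we have $h(0)=d_g(p,q)=\ell(c_0)$, and the first variation of arclength gives $h'(0)=-\langle c_1'(0),c_0'(0)\rangle=-\cos\alpha$; hence $u(0)=\tilde u(0)$ and $u'(0)=-\sin\ell(c_0)\,h'(0)=\sin\ell(c_0)\cos\alpha=\tilde u'(0)$. The second step is the second-order comparison. Where $\rho$ is smooth, the Hessian comparison theorem for $K(g)\ge1$ (a consequence of the Rauch comparison theorem, applicable since $\rho<\pi$) gives $\mathrm{Hess}\,\rho\le\cot\rho\,(g-d\rho\otimes d\rho)$, and multiplying by $-\sin\rho<0$ in the expansion $\mathrm{Hess}(\cos\rho)=-\cos\rho\,d\rho\otimes d\rho-\sin\rho\,\mathrm{Hess}\,\rho$ yields $\mathrm{Hess}(\cos\rho)\ge-\cos\rho\cdot g$; restricting to the geodesic $c_1$ this reads $u''(t)\ge-u(t)$. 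Where $c_1(t)$ lies in the cut locus of $q$, $\rho$ and $u$ are only Lipschitz, and I would pass to the Calabi trick: a point $q_\varepsilon$ just before $c_1(t_0)$ on a minimal geodesic from $q$ furnishes a smooth \emph{lower} support function $\phi=\cos(d_g(\cdot,q_\varepsilon)+\varepsilon)$ for $u$ at $t_0$, to which the genuine Hessian comparison applies, giving $\phi''+\phi\ge0$. Thus $u$ is a locally Lipschitz function on $[0,\ell(c_1)]$ admitting everywhere smooth lower support functions $\phi$ with $\phi''+\phi\ge0$.

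The third step is a Sturm-type comparison. Put $w:=u-\tilde u$, so $w(0)=0$, $w'(0)=0$, and $w$ is a barrier-sense supersolution of $y''+y=0$ on $[0,\pi)$. The comparison principle for this inequality --- most transparently seen, where $w$ is smooth, from $\frac{d}{dt}(w'\sin t-w\cos t)=(w''+w)\sin t\ge0$ together with $\lim_{t\to0^+}(w'\sin t-w\cos t)=0$, whence $\frac{d}{dt}(w/\sin t)\ge0$ and $w/\sin t\to0$ as $t\to0^+$ --- forces $w\ge0$ on $(0,\pi)$, hence on $[0,\ell(c_1))$; the endpoint is handled by continuity and the reduction $\ell(c_1)\le\pi$. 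Evaluating $u\ge\tilde u$ at $t=\ell(c_1)$ gives $\cos h(\ell(c_1))\ge\cos\tilde h(\ell(c_1))$, i.e.\ $\ell(c)=h(\ell(c_1))\le\tilde h(\ell(c_1))=\ell(\tilde c)$.

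I expect the cut locus to be the main obstacle: $h$ is not $C^1$ in general, so the second-order comparison must be run entirely through support functions (equivalently, through an upper Riccati bound for the radial second fundamental form that persists across focal and cut points), and one must also keep all the distances in the range where the Hessian comparison is valid. The treatment in \cite{CE} avoids barriers altogether: it first proves the hinge and angle comparisons together for \emph{short} hinges via the second variation formula and the index lemma, and then subdivides $c_1$ into short geodesic subarcs and iterates, invoking the angle comparison at each stage to keep the accumulated configuration dominated by the corresponding configuration in $\mathbb S^n$; there the delicate point is the bookkeeping that controls the behavior of the angles along the subdivision.
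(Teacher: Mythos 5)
The paper does not prove this theorem; it is stated as a standard result with a reference to Cheeger--Ebin \cite{CE}, and the $\square$ after the statement indicates no proof is supplied. Your proposal is a genuine proof attempt, and it takes the Grove--Karcher style analytic route (Hessian comparison $\mathrm{Hess}\,\rho\le\cot\rho\,(g-d\rho\otimes d\rho)$, passage to $u=\cos\rho$, the Calabi support-function trick across the cut locus, and a Sturm comparison for $u''+u\ge0$ in the barrier sense), which is different from the route in \cite{CE} (short hinges via second variation and the index lemma, then subdivision and angle monotonicity). Your approach is correct in substance and arguably more streamlined for $K\ge1$; the \cite{CE} approach is more elementary in its tools but requires the angle bookkeeping you mention. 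Both are legitimate, and you clearly understand the distinction.

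Two details deserve a flag. First, $\ell(c_1)\le\pi$ is \emph{not} a consequence of Bonnet--Myers, since $c_1$ is not assumed minimizing and a geodesic segment can have length greater than the diameter; in \cite{CE} this bound (together with $c_0$ minimal) is an explicit hypothesis of the hinge theorem, and you should import it as such rather than derive it. (For the paper's application in Lemma~3.4 both hypotheses hold automatically, so the omission in the paper's statement is harmless there.) Second, the Calabi barrier $\phi_\varepsilon=\cos(d(\cdot,q_\varepsilon)+\varepsilon)$ does not quite satisfy $\phi_\varepsilon''+\phi_\varepsilon\ge0$: a short computation gives $\phi_\varepsilon''+\phi_\varepsilon\ge-(1-v^2)\,\sin\varepsilon/\sin\rho_\varepsilon$ with $v=(\rho_\varepsilon\circ c_1)'$, which is $\ge -o(1)$ as $\varepsilon\to0$ (provided $\rho$ stays in $(0,\pi)$), not $\ge0$. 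This is exactly what the barrier-sense differential inequality demands and the Sturm step still goes through after the usual perturbation by $c\sin t$, but the claim as written is slightly too strong.
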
\qed

Recall the excess function defined in \cite{AG} as follows. 
\begin{definition}\label{def excess func}
	{For $p, q\in (M^n, g)$, we define the \textbf{excess function with respect to $p, q$}, denoted as $\mathbf{E}_{p, q}: M^n\rightarrow \mathbb{R}$, as:
		\begin{align}
			\mathbf{E}_{p, q}(x)= d_g(p, x)+ d_g(q, x)- d_g(p, q). \nonumber 
		\end{align}
	}
\end{definition}

\begin{lemma}\label{lem excess est for K geq 1}
	For complete Riemannian manifold $(M^n,g)$ with $K(g)\ge 1$, we have 
	\begin{align}
		\sup_{x\in M^n}\mathbf{E}_{p, q}(x)\leq 2(\pi- d_g(p, q)), \quad \quad \quad \forall p, q\in M^n. \nonumber 
	\end{align}
\end{lemma}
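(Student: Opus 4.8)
The plan is to fix $p, q \in M^n$ and an arbitrary point $x \in M^n$, and to estimate $\mathbf{E}_{p,q}(x) = d_g(p,x) + d_g(q,x) - d_g(p,q)$ by comparing with the round sphere $\mathbb{S}^n$ via Toponogov's theorem (Theorem \ref{thm Toponogov}). Write $a = d_g(p,x)$, $b = d_g(q,x)$, $\ell = d_g(p,q)$; by Bonnet--Myers all three are at most $\pi$. Let $\gamma_{p,x}$ and $\gamma_{x,q}$ be minimizing geodesics, meeting at $x$ with some angle $\alpha$; these form a hinge with closing edge $\gamma_{p,q}$ of length $\ell$. By Toponogov, the corresponding hinge in $\mathbb{S}^n$ with the same side lengths $a, b$ and the same angle $\alpha$ has closing edge of length $\tilde{\ell} \geq \ell$. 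On $\mathbb{S}^n$ the spherical law of cosines gives $\cos \tilde{\ell} = \cos a \cos b + \sin a \sin b \cos \alpha$, so in particular $\cos \tilde\ell \geq \cos a\cos b - \sin a \sin b = \cos(a+b)$, i.e. $\tilde\ell \le a+b$ (when $a+b\le \pi$; if $a+b>\pi$ the bound $\tilde\ell\le\pi\le a+b$ is automatic). Hence I obtain the elementary-looking estimate by instead keeping $\tilde\ell$ explicit.

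The sharper route: since $\tilde\ell \geq \ell$ and $\cos$ is decreasing on $[0,\pi]$, we get $\cos\ell \geq \cos\tilde\ell = \cos a\cos b + \sin a\sin b\cos\alpha \geq \cos a \cos b - \sin a \sin b = \cos(a+b)$ only in the worst case $\alpha = \pi$; but to bound the excess $a + b - \ell$ from above I want a lower bound on $\ell$ in terms of $a,b$, which Toponogov does not directly give. So instead I will bound $a + b$ from above. The key point is that $x$ lies on \emph{some} minimizing geodesic issuing from $p$, and one applies the hinge comparison at $p$ (or at $q$): consider the hinge at $p$ formed by $\gamma_{p,x}$ and $\gamma_{p,q}$ with angle $\beta$ at $p$; its closing edge has length $b = d_g(x,q)$, and Toponogov gives that the spherical hinge with sides $a, \ell$ and angle $\beta$ has closing edge $\tilde b \geq b$. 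Spherically, $\cos \tilde b = \cos a \cos\ell + \sin a \sin\ell\cos\beta$, so $\cos b \geq \cos\tilde b$, hence $\cos b \geq \cos a\cos\ell + \sin a \sin\ell \cos\beta \geq \cos a \cos\ell - \sin a\sin\ell = \cos(a+\ell)$ when $\beta=\pi$; again only the extreme case. This shows the genuine content is a trigonometric inequality: given the three Toponogov constraints simultaneously, maximize $a+b-\ell$.

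Concretely I will argue as follows. Put $d = d_g(p,q) = \ell$ and let $m$ be a midpoint of $\gamma_{p,q}$. Apply Toponogov (hinge form) to the two hinges at $m$: the one along $\gamma_{m,p}$ and $\gamma_{m,x}$ with closing edge $\gamma_{p,x}$, and the one along $\gamma_{m,q}$ and $\gamma_{m,x}$ with closing edge $\gamma_{q,x}$; the two angles at $m$ sum to at most $\pi$ (they are supplementary since $m$ is interior to $\gamma_{p,q}$, plus possibly the third direction, so actually exactly $\pi$ minus nothing — I will use that they are $\theta$ and $\pi - \theta$). Writing $r = d_g(m,x)$ and $\ell/2$ for the two half-lengths, spherical comparison yields $\cos a \geq \cos(\ell/2)\cos r + \sin(\ell/2)\sin r \cos\theta$ and $\cos b \geq \cos(\ell/2)\cos r - \sin(\ell/2)\sin r\cos\theta$. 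Adding, $\cos a + \cos b \geq 2\cos(\ell/2)\cos r \geq 2\cos(\ell/2)\cos\pi$-type bounds are too weak; instead use $\cos r \geq -1$ is useless, so bound $\cos a + \cos b \geq 2\cos(\ell/2)\cos r$. Since $2\cos\tfrac{a+b}{2}\cos\tfrac{a-b}{2} = \cos a + \cos b \geq 2\cos(\ell/2)\cos r$, and $r \leq \pi$, one extracts $\cos\tfrac{a+b}{2} \geq \cos(\ell/2)\cos r \geq -|\cos(\ell/2)| \geq -\cos(\ell/2)$ (as $\ell/2 \le \pi/2$), hence $\tfrac{a+b}{2} \leq \pi - \ell/2$, i.e. $a + b \leq 2\pi - \ell$, giving $\mathbf{E}_{p,q}(x) = a + b - \ell \leq 2\pi - 2\ell = 2(\pi - \ell)$, as claimed.

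The main obstacle is arranging the hinge comparison so that the angle bookkeeping (the two angles at the midpoint being supplementary) is rigorous, and handling the case where some of $a, b, r$ equal $\pi$ or $\gamma_{p,q}$ is not unique — in those degenerate cases one checks the inequality directly, and when $a+b\ge\pi$ or $r=\pi$ the bound $a+b\le 2\pi-\ell$ needs the midpoint estimate rather than a naive triangle inequality. I expect the clean statement to come out exactly from the displayed two spherical inequalities at the midpoint, added together, together with $\cos r \geq \cos\pi$; the rest is the monotonicity of cosine on $[0,\pi]$.
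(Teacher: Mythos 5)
Your setup of the midpoint hinge comparison is fine (the two angles at $m$ are indeed $\theta$ and $\pi-\theta$, and adding the two Toponogov hinge inequalities legitimately gives $\cos a+\cos b\ge 2\cos(\ell/2)\cos r$), but the extraction step is a genuine gap: from $\cos\frac{a+b}{2}\cos\frac{a-b}{2}\ge \cos(\ell/2)\cos r\ge -\cos(\ell/2)$ you cannot conclude $\cos\frac{a+b}{2}\ge -\cos(\ell/2)$. The implicit step drops the factor $\cos\frac{a-b}{2}\in[0,1]$, which is only harmless when $\cos\frac{a+b}{2}\ge 0$; when $\cos\frac{a+b}{2}<0$ multiplying by a number in $[0,1]$ \emph{increases} the left side, so the inequality runs the wrong way. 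Concretely, $a=\pi$, $b=\pi/2+0.4$, $\ell=\pi/2$, $r=\pi$ satisfy $\cos\frac{a+b}{2}\cos\frac{a-b}{2}\approx -0.69\ge -\cos(\pi/4)\approx -0.71$ and $|a-b|\le\ell$, yet $\cos\frac{a+b}{2}\approx -0.83<-\cos(\pi/4)$, i.e.\ $a+b>2\pi-\ell$. Of course this triple is not realizable with $K\ge 1$, but that is exactly the point: the full comparison-geometric input (the two hinge inequalities used individually, not just their sum) is needed, and the purely trigonometric summed version you reduce to is too weak.

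Ironically, you had the paper's argument in hand and discarded it. You considered the hinge at $x$ with sides $a=d_g(p,x)$, $b=d_g(q,x)$ and angle $\alpha$, obtained via Toponogov a spherical triangle with \emph{the same} two sides $a,b$ and a closing edge $\tilde\ell\ge\ell$, and then wrote that Toponogov ``does not directly give'' a lower bound on $\ell$. It does: the spherical triangle has perimeter at most $2\pi$, so $a+b+\tilde\ell\le 2\pi$, and combined with $\tilde\ell\ge\ell$ this yields $a+b\le 2\pi-\ell$, hence $\mathbf E_{p,q}(x)=a+b-\ell\le 2(\pi-\ell)$. That is precisely the paper's proof. (Your midpoint variant can also be repaired the same way: realize both comparison hinges on $\mathbb S^n$ sharing the common side from $\tilde m$ to $\tilde x$, so that $\tilde p,\tilde m,\tilde q$ lie on one great circle with $d_{\mathbb S^n}(\tilde p,\tilde q)=\ell$; then $a+b\le d_{\mathbb S^n}(\tilde p,\tilde x)+d_{\mathbb S^n}(\tilde q,\tilde x)\le 2\pi-\ell$ by the same perimeter bound. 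But then nothing is gained over working directly at $x$.)
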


\pf
{Applying Toponogov Theorem on hinges $\{\gamma_{x, p}, \gamma_{x, q}, \alpha\}\subseteq (M^n, g)$, we can find a spherical triangle $\{\gamma_{\tilde{x}, \tilde{p}}, \gamma_{\tilde{x}, \tilde{q}}, \alpha\}\subseteq \mathbb{S}^n$, where $\alpha$ are angles between geodesic segments $\gamma_{x, p}, \gamma_{x, q}$ and
	\begin{align}
		d_g(p, x)= d_{\mathbb{S}^n}(\tilde{p}, \tilde{x}), \quad \quad d_g(q, x)= d_{\mathbb{S}^n}(\tilde{q}, \tilde{x}), \quad \quad d_{\mathbb{S}^n}(\tilde{p}, \tilde{q})\geq d_g(p, q). \nonumber 
	\end{align}
	
	From spherical geometry, we know that
	\begin{align}
		d_{\mathbb{S}^n}(\tilde{p}, \tilde{x})+ d_{\mathbb{S}^n}(\tilde{q}, \tilde{x})\leq 2\pi- d_{\mathbb{S}^n}(\tilde{p}, \tilde{q}). \nonumber 
	\end{align}
	
	Using $d_{\mathbb{S}^n}(\tilde{p}, \tilde{q})\geq d_g(p, q)$, then 
	\begin{align}
		&\mathbf{E}_{p, q}(x)= d_g(p, x)+ d_g(q, x)- d_g(p, q)= d_{\mathbb{S}^n}(\tilde{p}, \tilde{x})+ d_{\mathbb{S}^n}(\tilde{q}, \tilde{x})- d_g(p, q) \nonumber \\
		&\leq 2\pi- d_{\mathbb{S}^n}(\tilde{p}, \tilde{q})- d_g(p, q)\leq 2\pi- 2d_g(p, q).\nonumber 
	\end{align}
}
\qed

\begin{lemma}\label{lem height by excess}
	{For any triangle $\triangle abc\subseteq \mathbb{R}^2$, we have 
		\begin{align}
			d(a,  l_{b, c})^2\le\frac{(|b-a|+|c- a|)^2-|b-c|^2}{4} . \nonumber 
		\end{align}
	}
\end{lemma}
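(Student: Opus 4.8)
The plan is to reduce the inequality to an algebraic identity using the area of the triangle. Write $x = |b-a|$, $y = |c-a|$, $z = |b-c|$ and $s = \tfrac{1}{2}(x+y+z)$; since $l_{b,c}$ requires $b \neq c$, we have $z > 0$. Taking $\overline{bc}$ as base, the area $A$ of $\triangle abc$ satisfies $2A = z\, d(a, l_{b,c})$, hence $d(a, l_{b,c})^2 = 4A^2/z^2$. On the other hand the right-hand side of the claim factors as
\begin{align}
\frac{(x+y)^2 - z^2}{4} = \frac{(x+y-z)(x+y+z)}{4} = s(s-z). \nonumber
\end{align}
So it suffices to prove $4A^2/z^2 \le s(s-z)$.

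First I would dispose of the degenerate case: if $a, b, c$ are collinear then $A = 0$, so $d(a, l_{b,c}) = 0$, and the inequality holds because $(x+y)^2 \ge z^2$ by the triangle inequality. Otherwise $s(s-z) > 0$, and by Heron's formula $A^2 = s(s-x)(s-y)(s-z)$, so after cancelling $s(s-z)$ the inequality becomes $4(s-x)(s-y) \le z^2$. This is now immediate from the identity
\begin{align}
4(s-x)(s-y) = (z - x + y)(z + x - y) = z^2 - (x-y)^2 \le z^2, \nonumber
\end{align}
which finishes the proof.

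There is essentially no genuine obstacle; the whole content is Heron's formula plus the identity $4(s-x)(s-y) = z^2 - (x-y)^2$, and the only point needing a word is nonnegativity of the right-hand side (triangle inequality) and the collinear case. For completeness I would note two alternative routes. One is elementary and Heron-free: drop the perpendicular from $a$ to $l_{b,c}$ with foot $H$, put $h = |a-H|$, $u = |b-H|$, $v = |c-H|$, so that $x^2 = u^2 + h^2$, $y^2 = v^2 + h^2$, and $z = u+v$ or $z = |u-v|$ according to the position of $H$; expanding $(x+y)^2$ and using $\sqrt{(u^2+h^2)(v^2+h^2)} \ge uv + h^2$ (which reduces to $h^2(u-v)^2 \ge 0$) yields $(x+y)^2 \ge z^2 + 4h^2$ in either case, which is the claim. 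The other is conceptual: $\tfrac14\big((x+y)^2 - z^2\big)$ is exactly the squared semi-minor axis of the ellipse with foci $b, c$ through $a$ (since $x+y$ and $z$ are twice the semi-major axis and twice the linear eccentricity), and the distance from any point of an ellipse to the line joining its foci never exceeds the semi-minor axis.
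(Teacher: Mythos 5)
Your argument is correct and is essentially the same as the paper's: the paper obtains the exact identity $d(a,l_{b,c})^2 = \frac{1}{4|b-c|^2}\bigl((|b-a|+|c-a|)^2-|b-c|^2\bigr)\bigl(|b-c|^2-(|b-a|-|c-a|)^2\bigr)$ from the Cosine Law (equivalent to your Heron computation via $4A^2 = |b-c|^2\, d(a,l_{b,c})^2$), and then discards the factor $\frac{|b-c|^2-(|b-a|-|c-a|)^2}{|b-c|^2}\le 1$, which is precisely your reduction to $4(s-x)(s-y)\le z^2$. Your explicit handling of the collinear case and the two alternative proofs are extra polish rather than a different route.
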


\pf
{Assume $\overline{aa_0}\perp l_{b, c}$, where $a_0\in l_{b, c}$ (note $a_0$ possibly does not belong to $\overline{bc}$).  Then $d(a,  l_{b, c})= |a- a_0|$.  Now from the Cosine Law 
	\begin{align}
		d(a,  l_{b, c})^2&=\frac{1}{|b-c|^2}(\frac{(|b-a|+|c- a|)^2-|b-c|^2}{2})	\cdot(\frac{|b-c|^2-(|b-a|-|c- a|)^2}{2}) \nonumber \\
		\nonumber &\le\frac{(|b-a|+|c- a|)^2-|b-c|^2}{4} . \nonumber 
	\end{align}
}
\qed

Now we show the height estimate of an Euclidean triangle,  whose vertexes are in the image of $\mathscr{I}(M^n)$,  where $(M^n, g)$ has $K(g)\geq 1$ and $\mathscr{I}\in \mathcal{IE}((M^n, g), \mathbb{R}^m)$. 

\begin{prop}\label{prop one-side GH-appr}
	For complete Riemannian manifold $(M^n,g)$ with $K(g)\ge 1$ and $m\geq n+ 1$, assume $p, q\in M^n$. Then 
	\begin{align}
		\sup_{a\in M^n} \frac{d_{\mathbb{R}^m}(\mathscr{I}(a), l_{\mathscr{I}(p), \mathscr{I}(q)})}{\sqrt{\pi- |\mathscr{I}(p)- \mathscr{I}(q)|_{\mathbb{R}^m}}}\leq \sqrt{\pi},  \quad \quad \quad \forall \mathscr{I}\in \mathcal{IE}((M^n, g),  \mathbb{R}^m).\label{extrin estimate}
	\end{align}
\end{prop}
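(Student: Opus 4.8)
The plan is to reduce the estimate to the two–dimensional height bound of Lemma \ref{lem height by excess}, applied to the Euclidean triangle with vertices $\mathscr{I}(a), \mathscr{I}(p), \mathscr{I}(q)$, and then to control the three side lengths of that triangle by the intrinsic geometry of $(M^n, g)$ via the excess estimate of Lemma \ref{lem excess est for K geq 1}.

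First, fix $a\in M^n$; we may assume $p\neq q$ and $a\notin\{p,q\}$, since otherwise the left-hand side of (\ref{extrin estimate}) vanishes. The three points $\mathscr{I}(a),\mathscr{I}(p),\mathscr{I}(q)$ lie in a common affine $2$-plane of $\mathbb{R}^m$, so Lemma \ref{lem height by excess} applies and gives
\begin{align}
d_{\mathbb{R}^m}\big(\mathscr{I}(a), l_{\mathscr{I}(p),\mathscr{I}(q)}\big)^2\le \frac{\big(|\mathscr{I}(p)-\mathscr{I}(a)|+|\mathscr{I}(q)-\mathscr{I}(a)|\big)^2-|\mathscr{I}(p)-\mathscr{I}(q)|^2}{4}. \nonumber
\end{align}
Next I would use the standard fact that a smooth isometric embedding does not increase distances: $\mathscr{I}$ maps a minimizing geodesic $\gamma_{x,y}$ of $(M^n,g)$ to a curve of the same length in $\mathbb{R}^m$, hence $|\mathscr{I}(x)-\mathscr{I}(y)|_{\mathbb{R}^m}\le \ell(\gamma_{x,y})=d_g(x,y)$ for all $x,y\in M^n$. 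In particular $|\mathscr{I}(p)-\mathscr{I}(a)|+|\mathscr{I}(q)-\mathscr{I}(a)|\le d_g(p,a)+d_g(q,a)=\mathbf{E}_{p,q}(a)+d_g(p,q)$, and by Lemma \ref{lem excess est for K geq 1} this is at most $2(\pi-d_g(p,q))+d_g(p,q)=2\pi-d_g(p,q)$.

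Finally, write $L\vcentcolon=|\mathscr{I}(p)-\mathscr{I}(q)|_{\mathbb{R}^m}$ and $D\vcentcolon=d_g(p,q)$. Since $K(g)\ge 1$ gives $Rc(g)\ge n-1$, Bonnet--Myers yields $D\le\pi$, and the distance-nonincreasing property gives $L\le D$. Combining the two displays above, the numerator is bounded by $(2\pi-D)^2-L^2$; as $t\mapsto(2\pi-t)^2$ is decreasing on $[0,\pi]$ and $L\le D$, this is at most $(2\pi-L)^2-L^2=4\pi(\pi-L)$. Therefore
\begin{align}
d_{\mathbb{R}^m}\big(\mathscr{I}(a), l_{\mathscr{I}(p),\mathscr{I}(q)}\big)^2\le \pi\,(\pi-L), \nonumber
\end{align}
which is exactly (\ref{extrin estimate}) after dividing by $\pi-L$ and taking the supremum over $a\in M^n$. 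I do not anticipate a real obstacle: the substantive work is already encapsulated in Lemmas \ref{lem excess est for K geq 1} and \ref{lem height by excess}, and the only points requiring care are the degenerate cases, the (elementary) distance-nonincreasing property of $\mathscr{I}$, and using $L\le D\le\pi$ in the correct direction when monotonizing $(2\pi-D)^2-L^2$.
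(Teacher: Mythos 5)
Your proof is correct and follows essentially the same route as the paper's: both reduce to the Euclidean-triangle height bound of Lemma \ref{lem height by excess} applied to $\triangle\mathscr{I}(p)\mathscr{I}(a)\mathscr{I}(q)$, use that $\mathscr{I}$ is distance-nonincreasing to pass to intrinsic distances, and invoke the Toponogov excess estimate of Lemma \ref{lem excess est for K geq 1}. The only divergence is cosmetic: you bound the whole numerator by $(2\pi-D)^2-L^2\le(2\pi-L)^2-L^2=4\pi(\pi-L)$ via monotonicity, whereas the paper factors it as a difference of squares and bounds the two factors by $2\epsilon$ and $2\pi$ respectively — both give $h^2\le\pi(\pi-L)$.
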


\begin{remark}\label{rem extrin width}
	{If $\mathrm{Diam}_\mathscr{I}(M^n, g)= \pi- \epsilon$ for some positive $\epsilon<< 1$, choose $p, q$ such that $|\mathscr{I}(p)- \mathscr{I}(q)|_{\mathbb{R}^m}= \pi- \epsilon$; then Proposition \ref{prop one-side GH-appr} gives the ``extrinsic width" estimates for $\mathscr{I}(M^n)\subseteq \mathbb{R}^m$.  
	}
\end{remark}

\pf
{Assume $|\mathscr{I}(p)-\mathscr{I}(q)|_{\mathbb{R}^m}=\pi-\epsilon$, where $\epsilon> 0$. Assume $d_{g}(p,q)=\pi-\delta$,  then $\delta\leq \epsilon$. By Lemma \ref{lem excess est for K geq 1}, we get
	\begin{align}
		\nonumber \sup_{x\in M^n}\mathbf{E}_{p, q}(x)+d_g(p,q)\le2\pi-d_g(p,q)=\pi+\delta\le\pi+\epsilon.
	\end{align}
	
	Consider the Euclidean triangle $\Delta \mathscr{I}(p)\mathscr{I}(a)\mathscr{I}(q)\subseteq \mathbb{R}^{m}$ with $\overline{\mathscr{I}(a)b}\perp l_{\mathscr{I}(p), \mathscr{I}(q)}$, where $b\in l_{\mathscr{I}(p), \mathscr{I}(q)}$. 	We define $h\vcentcolon= |\mathscr{I}(a)- b|$ (see Figure \ref{figure: E-Triangle}), then from Lemma \ref{lem height by excess} and Lemma \ref{lem excess est for K geq 1}, we obtain
	\begin{align}
		h^2 &\le\frac{(|\mathscr{I}(p)-\mathscr{I}(a)|+|\mathscr{I}(q)- \mathscr{I}(a)|)^2-|\mathscr{I}(p)-\mathscr{I}(q)|^2}{4} \nonumber \\
		&		\le\frac{(d_g(p,a)+d_g(q,a))^2-|\mathscr{I}(p)-\mathscr{I}(q)|^2}{4}  \nonumber \\
		&=(\mathbf{E}_{p, q}(a)+ d_g(p,q)- |\mathscr{I}(p)- \mathscr{I}(q)|)\cdot \frac{d_g(p,a)+d_g(q,a)+ |\mathscr{I}(p)- \mathscr{I}(q)|}{4}\nonumber \\
		\nonumber& \le\frac{(\pi+\epsilon-(\pi-\epsilon))(\mathbf{E}_{p, q}(a)+ 2d_g(p, q))}{4}\leq\epsilon\cdot \pi. 
	\end{align}
	The conclusion follows.
	\begin{figure}[H]
		\centering
		\includegraphics[width=7cm,height=10cm]{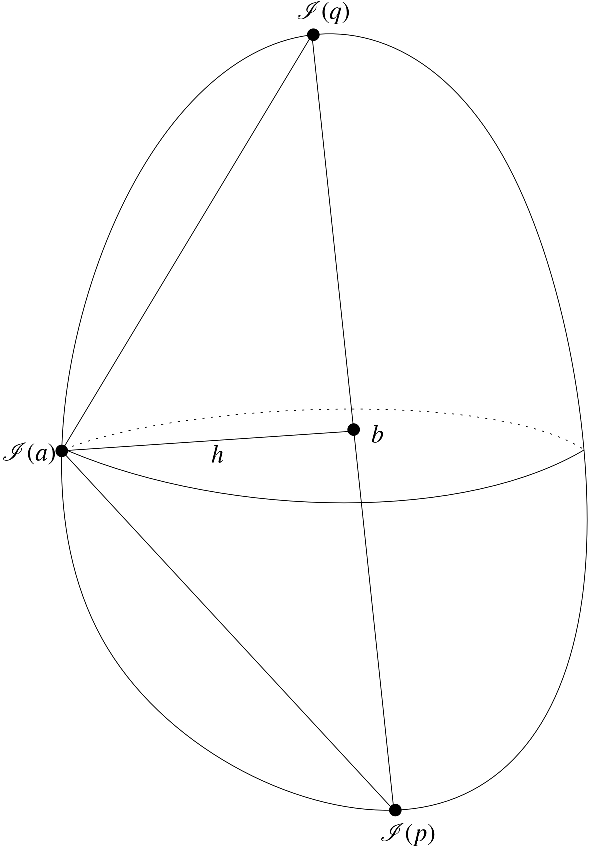}
		\caption{The Euclidean Triangle}
		\label{figure: E-Triangle}
	\end{figure}
}
\qed

\section{The almost rigidity for extrinsic diameter}\label{sec almost rigidity}

From Remark \ref{rem extrin width}, we know that $\mathscr{I}(M^n)$ is in a small neighborhood of the line segment $\overline{\mathscr{I}(p)\mathscr{I}(q)}\subseteq \mathbb{R}^m$, where $\mathscr{I}\in \mathcal{IE}((M^n, g), \mathbb{R}^m)$ and $|\mathscr{I}(p)- \mathscr{I}(q)|$ is the extrinsic diameter of $\mathscr{I}(M^n)$ in $\mathbb{R}^m$. In other words, the manifold $\mathscr{I}(M^n)\subseteq \mathbb{R}^m$ is close to $\overline{\mathscr{I}(p)\mathscr{I}(q)}$ in $\mathbb{R}^m$.

However, to get the upper bound of the Gromov-Hausdorff distance between $\mathscr{I}(M^n)$ and $\overline{\mathscr{I}(p)\mathscr{I}(q)}$, we also need suitable information about the second fundamental form $\mathscr{I}(M^n)\subseteq \mathbb{R}^m$. 

When the co-dimension of $\mathscr{I}(M^n)$ is $1$, we get the positiveness of the second fundamental form for $\mathscr{I}(M^n)\subseteq \mathbb{R}^{n+ 1}$ as follows.
\begin{lemma}\label{lem from Rc>0 to prin curv > 0}
If $(M^n,g)\subset\mathbb{R}^{n+1}$ is a compact Riemannian manifold with $Rc(g)> 0$,  then $(M^n,  g)$ is a closed,  strictly convex hypersurface in $\mathbb{R}^{n+ 1}$.
\end{lemma}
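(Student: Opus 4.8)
The plan is to prove that a compact hypersurface $(M^n, g) \subset \mathbb{R}^{n+1}$ with $Rc(g) > 0$ is a closed, strictly convex hypersurface, which amounts to showing that the second fundamental form (equivalently, the shape operator) is definite at every point, and then invoking the classical Hadamard-type theorem that a compact connected hypersurface in $\mathbb{R}^{n+1}$ with everywhere definite second fundamental form bounds a convex body and is embedded as the boundary of that body. So there are really two parts: a pointwise linear-algebra step, and a global topological step.

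For the pointwise step, I would argue as follows. Since $M^n$ is compact, there is a point $x_0$ where the function $x \mapsto |x|^2_{\mathbb{R}^{n+1}}$ (restricted to $M$) attains its maximum; equivalently $M$ lies inside a closed ball touching $M$ at $x_0$. Comparing the second fundamental form of $M$ at $x_0$ with that of the sphere of the same radius, one sees that the shape operator $A$ of $M$ at $x_0$ (with respect to the inward normal) is positive definite. Hence the Gauss curvature (the product of principal curvatures), or more robustly the determinant of $A$, is positive at $x_0$. Now I want to propagate this positivity to the whole manifold. The key is the Gauss equation: for an orthonormal basis of principal directions $\{e_i\}$ with principal curvatures $\kappa_i$, the sectional curvature satisfies $K(e_i, e_j) = \kappa_i \kappa_j$, so $Rc(e_i, e_i) = \kappa_i \sum_{j \neq i} \kappa_j = \kappa_i(H - \kappa_i)$ where $H = \sum_j \kappa_j$ is the (unnormalized) mean curvature. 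From $Rc(g) > 0$ we get $\kappa_i(H - \kappa_i) > 0$ for every $i$ at every point. This forces, at each point, either all $\kappa_i > 0$ (and $H > 0$), or all $\kappa_i < 0$ (and $H < 0$): indeed if some $\kappa_i > 0$ then $H > \kappa_i > 0$, and then $\kappa_j(H - \kappa_j) > 0$ together with $H > 0$ forces $\kappa_j > 0$ for all $j$ (one checks that $\kappa_j < 0$ would make $\kappa_j(H-\kappa_j) < 0$); the case $\kappa_i < 0$ is symmetric, and $\kappa_i = 0$ is impossible since then $Rc(e_i,e_i) = 0$. Consequently the shape operator is definite at every point, and by continuity of the principal curvatures and connectedness of $M$ its signature is constant; choosing the normal appropriately we may assume all $\kappa_i > 0$ everywhere. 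In particular the Gauss–Kronecker curvature $\det A = \prod_i \kappa_i > 0$ everywhere.

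For the global step, I would invoke the classical theorem of Hadamard (in the form due to Chern–Lashof, Sacksteder, do Carmo–Lima, or as stated in standard references such as do Carmo's book or Spivak): a compact connected immersed hypersurface in $\mathbb{R}^{n+1}$ whose second fundamental form is positive definite everywhere is embedded, is diffeomorphic to $S^n$, and bounds a convex body; the Gauss map is a diffeomorphism onto $S^n$. This gives that $(M^n, g)$ is a closed, strictly convex hypersurface, which is exactly the assertion. If one prefers a self-contained argument, one can use the positivity of $\det A$ to show the Gauss map $\nu: M \to S^n$ is a local diffeomorphism, hence a covering map by compactness, hence (since $S^n$ is simply connected for $n \geq 2$) a diffeomorphism; then a standard support-function/convexity argument shows $M$ is the boundary of the convex set $\bigcap_{x \in M}\{y : \langle y - x, \nu(x)\rangle \leq 0\}$ and in particular embedded and strictly convex.

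The main obstacle is the pointwise definiteness argument: one must genuinely use the touching-ball point to pin down the \emph{sign} (a manifold with $Rc > 0$ need not a priori have definite second fundamental form without this, and the Gauss-equation bootstrap only shows the $\kappa_i$ all have the same sign \emph{at each point}, not that this sign is globally consistent — that requires connectedness plus the continuity of principal curvatures, which is slightly delicate where principal curvatures have multiplicities, though the symmetric functions, e.g. $H$ and $\det A$, are smooth and that is enough to run the argument). Once definiteness is in hand, the passage to global convexity is a citation of a well-known theorem, so I would not reprove it in detail.
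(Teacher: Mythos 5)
Your proof is correct and follows essentially the same route as the paper: both use the Gauss equation $Rc(e_i,e_i)=\lambda_i(H-\lambda_i)>0$ to force, at each point, that all principal curvatures have a common strict sign, and both then invoke a classical global theorem (you cite Hadamard/Sacksteder/do Carmo--Lima; the paper cites Van Heijenoort \cite{VH}) to pass from pointwise definiteness of the second fundamental form to the conclusion that $M^n$ bounds a convex body. The one genuine difference is your opening touching-ball step to pin down the \emph{sign} of the principal curvatures: this is an extra idea, but it is not actually needed for the statement, and the paper omits it. Strict convexity of the image does not depend on a choice of orientation, so once the shape operator is known to be definite everywhere (which your propagation-by-connectedness argument, or simply the continuity of $H$, already gives), one can flip the global unit normal to arrange $\lambda_i>0$; the touching-ball argument only recovers a sign convention you are free to impose anyway. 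Your caveat about continuity of individual principal curvatures at multiplicity points is a legitimate concern, and your remedy of using the smooth symmetric functions $H$ or $\det A$ is the right fix; the paper is silent on this, so your write-up is, if anything, a bit more careful at that step.
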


\begin{proof}
Without loss of generality,  we assume $\{\lambda_i\}_{i=1}^n$ are the principal curvatures of $(M^n,g)\subset\mathbb{R}^{n+1}$ and $\lambda_1\leq \cdots \leq \lambda_n$.  We firstly show that  the principal curvature $\lambda_i(M^n)> 0$ for $i= 1, \cdots, n$.

If $\lambda_n< 0$ or $\lambda_1> 0$,  we are done.  

Otherwise, there is $1\leq i_0\leq n$ such that
\begin{align}
\lambda_{i_0}\leq 0\leq \lambda_{i_0+ 1}.  \label{contradiction impliciation} 
\end{align}

From the Gauss equation on $(M^n, g)\subseteq \mathbb{R}^{n+ 1}$, the Ricci curvature of $g$ is as follows:
 \begin{align}
& R_{i_0i_0}=\lambda_{i_0}(H- \lambda_{i_0})>0,  \label{Rc i} \\
 &R_{i_0+ 1, i_0+ 1}= \lambda_{i_0+ 1}(H- \lambda_{i_0+ 1})>0,  \label{Rc i+ 1}
 \end{align}
	where $\displaystyle H= \sum_{i= 1}^n \lambda_i$ is the mean curvature. 
	
	From (\ref{contradiction impliciation}) and (\ref{Rc i}),  we get 
	\begin{align}
	H- \lambda_{i_0}< 0.  \label{1st ineq for i}
	\end{align}
	
	By (\ref{contradiction impliciation}) and (\ref{Rc i+ 1}),  we have
	\begin{align}
	H- \lambda_{i_0+ 1}> 0.  \label{2nd ineq for i+ 1}
	\end{align}
	
Now by (\ref{1st ineq for i}) and (\ref{2nd ineq for i+ 1}),  we obtain $\displaystyle \lambda_{i_0+ 1}< H< \lambda_{i_0}$.   It is the contradiction.  
	
The conclusion follows from that all $\lambda_i> 0$ and \cite[Theorem,  page $241$]{VH}.  
\end{proof}

The following estimate for convex hypersurface is used to control the Gromov-Hausdorff distance in Theorem \ref{thm almost rigidity}. 
\begin{lemma}\label{lem length comp for convex hypersurface}
{For $r> 0$ and $n\geq 1$, if $\Sigma^{n}\subseteq B(r)\subseteq \mathbb{R}^{n+ 1}$ is a closed convex hypersurface, then 
\begin{align}
\mathcal{H}^{n}(\Sigma^{n})\leq \mathcal{H}^{n}(\partial B(r)), \nonumber 
\end{align}
where $\mathcal{H}^n$ is $n$-dimensional Hausdorff measure and $B(r)$ is the ball with radius $r$ in $\mathbb{R}^{n+ 1}$.
}
\end{lemma}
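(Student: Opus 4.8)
The plan is to exploit the nearest-point projection onto the convex body enclosed by $\Sigma^n$, using its $1$-Lipschitz property together with the fact that $1$-Lipschitz maps do not increase Hausdorff measure. First I would set up the geometry: since $\Sigma^n$ is a closed convex hypersurface in $\mathbb{R}^{n+1}$, it is the boundary $\partial K$ of a compact convex body $K$, and the hypothesis $\Sigma^n \subseteq B(r)$ combined with convexity of $B(r)$ forces $K \subseteq \overline{B(r)}$. Recall next the nearest-point (metric) projection $\pi_K : \mathbb{R}^{n+1} \to K$, which sends each $x$ to the unique closest point of $K$; it is characterized by $\langle x - \pi_K(x),\, z - \pi_K(x) \rangle \leq 0$ for all $z \in K$, and from this variational inequality one obtains by a two-point comparison that $\pi_K$ is $1$-Lipschitz on all of $\mathbb{R}^{n+1}$.

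The key intermediate step is to show that the restriction $\pi_K\big|_{\partial B(r)} : \partial B(r) \to \Sigma^n$ is surjective. Given $p \in \Sigma^n = \partial K$, convexity provides a supporting hyperplane of $K$ at $p$, hence an outer unit normal $\nu$ at $p$; by the variational characterization, $\pi_K(p + t\nu) = p$ for every $t \geq 0$. Since $p \in \overline{B(r)}$ and the ray $\{p + t\nu : t \geq 0\}$ is unbounded, it meets $\partial B(r)$ at some point $q$, and then $\pi_K(q) = p$. Thus $\pi_K(\partial B(r)) = \Sigma^n$.

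Finally, since $\pi_K\big|_{\partial B(r)}$ is $1$-Lipschitz and surjective onto $\Sigma^n$, the monotonicity of $n$-dimensional Hausdorff measure under $1$-Lipschitz maps yields
\[
\mathcal{H}^n(\Sigma^n) = \mathcal{H}^n\big(\pi_K(\partial B(r))\big) \leq \mathcal{H}^n(\partial B(r)),
\]
which is the desired inequality. The main obstacle here is not any single hard estimate but the careful bookkeeping: confirming that $\Sigma^n$ genuinely bounds a compact convex body with $\partial K = \Sigma^n$, and writing the supporting-hyperplane/outer-normal argument for surjectivity cleanly; the $1$-Lipschitz property of $\pi_K$ and the behavior of Hausdorff measure under Lipschitz maps are standard facts. (An alternative route via Cauchy's surface-area formula, expressing $\mathcal{H}^n(\partial K)$ as an average of the measures of orthogonal projections and invoking monotonicity of those projected volumes under inclusion, would also work, but the metric-projection argument is the most self-contained.)
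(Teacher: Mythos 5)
Your proof takes essentially the same route as the paper: nearest-point projection onto the enclosed convex body, the $1$-Lipschitz property of that projection (cited there via \cite{Brezis}), and the fact that $1$-Lipschitz maps do not increase $\mathcal{H}^n$ (the paper invokes the area formula from \cite{EG} to the same effect). You in fact spell out the surjectivity of $\pi_K\big|_{\partial B(r)}$ onto $\Sigma^n$ via the outward-normal ray argument, which is the step the paper only records as the inclusion $\mathcal{P}(\partial B(r)) \subseteq \Sigma^n$; so your write-up is the more complete version of the same argument.
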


\pf
{
	Let $\Omega$ be the convex set enclosed by $\Sigma^n$ with $\partial\Omega= \Sigma^n$. Define the map	$\mathcal{P}: \overline{B(r)}\rightarrow \Omega$ as
	\begin{align}
	d(x, \mathcal{P}(x))= \inf_{y\in \Omega}d(x, y), \nonumber 
	\end{align}
	 which is a well-defined Lipschitz map with Lipschitz constant $\leq 1$(see \cite[Theorem $5.2$ and Proposition $5.3$]{Brezis}).

It is easy to get that $\mathcal{P}(\partial B(r))\subseteq \partial\Omega= \Sigma^n$. Now $\mathcal{H}^{n}(\Sigma^{n})\le\mathcal{H}^{n}(\partial B(r))$ follows from the area formula for Lipschitz map $\mathcal{P}$ (see \cite{EG}).
}
\qed

Let $(X, d_X)$ and $(Y, d_Y)$ be two metric spaces, a map $F: X\rightarrow Y$ is called an \textbf{$\epsilon$-Gromov-Hausdorff approximation} if 
\begin{align}
Y\subset \mathbf{U}_{\epsilon}\Big(F\big(X\big)\Big), \quad \quad \sup_{x_1, x_2\in X}\Big|d_Y\big(F(x_1), F(x_2)\big)- d_X(x_1, x_2)\Big|\leq \epsilon.  \nonumber 
\end{align}

The following lemma is closely related to \cite[$3.4 (d_+)$,  Proposition $3.5$]{Gromov-book}.  
\begin{lemma}\label{lem GH map implies GH dist control}
{Let $(X, d_X)$ and $(Y, d_Y)$ be two metric spaces,  if there is an $\epsilon$-Gromov-Hausdorff approximation $F: X\rightarrow Y$,  then $\displaystyle d_{GH}(X, Y)\leq 4\epsilon$.
}
\end{lemma}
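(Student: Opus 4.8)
The plan is to use the standard device of building a common metric space in which both $X$ and $Y$ embed isometrically, and to do this by defining a suitable "gluing" metric on the disjoint union $X \sqcup Y$ using the $\epsilon$-Gromov-Hausdorff approximation $F$. Concretely, I would set $Z = X \sqcup Y$ (disjoint union), keep the original metrics on $X$ and $Y$, and for $x \in X$, $y \in Y$ define $d_Z(x,y) := \inf_{x' \in X}\big( d_X(x, x') + d_Y(F(x'), y) \big) + \tfrac{\epsilon}{2}$. One must then check that this $d_Z$ satisfies the triangle inequality and is a genuine metric (the $\tfrac{\epsilon}{2}$ offset is needed so that no cross-distance vanishes, i.e. so that distinct points stay distinct; symmetry is built in by the infimum); this is the routine but slightly fiddly part. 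The almost-isometry property of $F$ — that $|d_Y(F(x_1),F(x_2)) - d_X(x_1,x_2)| \le \epsilon$ — is exactly what makes the triangle inequalities involving two points of $X$ and one of $Y$ (and symmetrically) go through.

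Once $d_Z$ is verified to be a metric, the inclusions $\iota_X : X \hookrightarrow Z$ and $\iota_Y : Y \hookrightarrow Z$ are isometric embeddings, so by definition $d_{GH}(X,Y) \le d_H^Z(\iota_X(X), \iota_Y(Y))$. It remains to estimate this Hausdorff distance. For any $x \in X$, taking $x' = x$ in the infimum gives $d_Z(x, F(x)) \le d_Y(F(x),F(x)) + \tfrac{\epsilon}{2} = \tfrac{\epsilon}{2}$, so every point of $X$ is within $\tfrac{\epsilon}{2}$ of a point of $Y$. Conversely, for $y \in Y$, the approximation hypothesis $Y \subset \mathbf{U}_\epsilon(F(X))$ gives some $x \in X$ with $d_Y(y, F(x)) \le \epsilon$, hence $d_Z(y, x) \le d_Y(F(x), y) + \tfrac{\epsilon}{2} \le \tfrac{3\epsilon}{2}$. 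Therefore $d_H^Z(\iota_X(X), \iota_Y(Y)) \le \tfrac{3\epsilon}{2} \le 4\epsilon$, which proves the claim (in fact with a better constant than stated, which is harmless).

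The only genuine obstacle is the verification that $d_Z$ is a bona fide metric — in particular the triangle inequality in the mixed cases and the positivity of cross distances. I would handle positivity by noting $d_Z(x,y) \ge \tfrac{\epsilon}{2} > 0$ directly from the definition, so $x \ne y$ never collapse; finiteness holds since $X, Y$ need only be assumed bounded (or one restricts attention to that case, which is all that is needed in the application), or one simply notes that all the metric spaces arising in this paper have finite diameter. For the triangle inequality, the case of three points all in $X$ or all in $Y$ is immediate; the cases with a mix reduce, after expanding the infimum defining $d_Z$, to the triangle inequalities in $X$ and $Y$ together with the two-sided bound $|d_Y(F(x_1),F(x_2)) - d_X(x_1,x_2)| \le \epsilon$ from the approximation hypothesis, where the $\tfrac{\epsilon}{2}$ offsets on each mixed edge combine to absorb the error term. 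I will write out this bookkeeping carefully but it is entirely standard; cf. \cite[$3.4(d_+)$, Proposition $3.5$]{Gromov-book}.
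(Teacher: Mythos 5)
Your proposal is correct and follows essentially the same strategy as the paper's proof: build a gluing metric on the disjoint union $Z = X \sqcup Y$ from the map $F$, then bound the Hausdorff distance of the two embedded copies in $Z$. The only differences are that the paper first fixes an $\epsilon$-dense net $\{x_i\}$ in $X$ and glues through that net with offset $\epsilon$ (which leads to the constant $4\epsilon$ after chasing triangle inequalities in Steps (2)–(3)), whereas you glue through all of $X$ with the minimal admissible offset $\tfrac{\epsilon}{2}$ and thereby obtain the tighter bound $\tfrac{3\epsilon}{2} \le 4\epsilon$ directly; both write-ups defer the routine verification that $d_Z$ is a metric.
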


\pf
{\textbf{Step (1)}.  We choose an $\epsilon$-dense net $\{x_i\}_{i\in I}$ of $X$,  define $y_i= F(x_i)\in Y$.  Let $Z= X\sqcup Y$,  define $d_Z\big|_{X}= d_X,  d_Z\big|_{Y}= d_Y$ and 
\begin{align}
d_Z(x, y)= \epsilon+ \inf_{i}[d_X(x, x_i)+ d_Y(y, y_i)],  \quad \quad \forall x\in X, y\in Y.  \nonumber 
\end{align}

We can verify that $(Z, d_Z)$ is a metric space and $X,  Y$ are isometrically embedded into $Z$.  

\textbf{Step (2)}.  Note for any $y\in Y$,  because $F$ is an $\epsilon$-Gromov-Hausdorff approximation from $X$ to $Y$,  there is $x\in X$ such that 
\begin{align}
d_Y(y, F(x))\leq \epsilon.  \nonumber 
\end{align}

Since $\{x_i\}$ is an $\epsilon$-dense net in $X$,  there is $i_0\in I$ such that $d_X(x,  x_{i_0})\leq \epsilon$.  Then 
\begin{align}
d_Z(x_{i_0}, y)&\leq \epsilon+ d_X(x_{i_0}, x_{i_0})+ d_Y(y, y_{i_0}) = \epsilon+  d_Y(y,  F(x_{i_0}))\nonumber \\
&\leq \epsilon+ d_Y(y,  F(x))+ d_Y(F(x), F(x_{i_0}))\leq 2\epsilon+ d_X(x, x_{i_0})+ \epsilon\leq 4\epsilon.  \nonumber 
\end{align}

From the above,  we obtain $Y\subseteq \mathbf{U}_{4\epsilon}(X)\subseteq Z$. 

\textbf{Step (3)}.  On the other hand,  for any $x\in X$,  there is $x_{i_0}$ such that $d_X(x, x_{i_0})\leq \epsilon$.

Now we get 
\begin{align}
d_Z(x,  y_{i_0})\leq \epsilon+ d_X(x, x_{i_0})+ d_Y(y_{i_0}, y_{i_0})\leq 2\epsilon.  \nonumber 
\end{align} 
Therefore $X\subseteq \mathbf{U}_{2\epsilon}(Y)\subseteq Z$.

From the above and the definition of Gromov-Hausdorff distance,  the conclusion follows.
}
\qed
	
Now we are ready to prove the main theorem in this section.
\begin{theorem}\label{thm almost rigidity}
	For complete Riemannian manifold $(M^n,g)$ with $K(g)\ge1$ and $\displaystyle \mathcal{IE}((M^n, g),  \mathbb{R}^{n+ 1})\neq \emptyset$,  we have
	\begin{align}
	\frac{d_{GH}((M^n, g), [0, \pi])}{\sqrt{\pi- \mathrm{Diam}_{\mathbb{R}^{n+ 1}}(M^n, g)} }  \leq 4\pi^{\frac{3}{2}}   \nonumber .
	\end{align}
\end{theorem}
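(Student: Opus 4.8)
The plan is to produce, for each fixed $\mathscr{I}\in\mathcal{IE}((M^n,g),\mathbb{R}^{n+1})$, an explicit $\eta$-Gromov--Hausdorff approximation $F\colon[0,\pi]\to(M^n,g)$ with $\eta=\pi^{3/2}\sqrt{\pi-\mathrm{Diam}_{\mathscr{I}}(M^n,g)}$, apply Lemma \ref{lem GH map implies GH dist control} to obtain $d_{GH}((M^n,g),[0,\pi])\le4\eta$, and then take the infimum over $\mathscr{I}$; since $\mathrm{Diam}_{\mathscr{I}}(M^n,g)<\pi$ for every such $\mathscr{I}$ by Theorem \ref{thm strict ineq for fixed IE}, this yields the asserted bound $d_{GH}((M^n,g),[0,\pi])\le4\pi^{3/2}\sqrt{\pi-\mathrm{Diam}_{\mathbb{R}^{n+1}}(M^n,g)}$. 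So fix $\mathscr{I}$, write $\epsilon\vcentcolon=\pi-\mathrm{Diam}_{\mathscr{I}}(M^n,g)>0$, pick (by compactness of $M^n$) points $p,q$ with $|\mathscr{I}(p)-\mathscr{I}(q)|=\mathrm{Diam}_{\mathscr{I}}(M^n,g)=\pi-\epsilon$, and normalize Euclidean coordinates so that $\mathscr{I}(p)=0$ and $\mathscr{I}(q)=(\pi-\epsilon,0,\dots,0)$; let $x$ denote the first coordinate and $H_t=\{x=t\}$. Three facts will be used: (i) $K(g)\ge1$ forces $Rc(g)\ge n-1>0$, so Lemma \ref{lem from Rc>0 to prin curv > 0} makes $\Omega\vcentcolon=\mathrm{conv}(\mathscr{I}(M^n))$ a convex body with strictly convex boundary $\partial\Omega=\mathscr{I}(M^n)$; (ii) every point of $\mathscr{I}(M^n)$ lies in $B(\mathscr{I}(p),\pi-\epsilon)\cap B(\mathscr{I}(q),\pi-\epsilon)$, so $\Omega\subseteq\{0\le x\le\pi-\epsilon\}$ with $\min_\Omega x=0$ and $\max_\Omega x=\pi-\epsilon$; (iii) by Proposition \ref{prop one-side GH-appr}, $d_{\mathbb{R}^{n+1}}(\mathscr{I}(a),l_{\mathscr{I}(p),\mathscr{I}(q)})\le\sqrt{\pi\epsilon}=:\rho$ for all $a\in M^n$, i.e.\ $\mathscr{I}(M^n)$ sits in the $\rho$-tube around $\overline{\mathscr{I}(p)\mathscr{I}(q)}$.

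The crucial step is the claim that $d_g(a,\gamma_{p,q})\le\pi\rho=\pi^{3/2}\sqrt{\epsilon}=:\eta$ for every $a\in M^n$, where $\gamma_{p,q}$ is a minimizing geodesic from $p$ to $q$. The map $\Phi\vcentcolon= x\circ\mathscr{I}\colon M^n\to[0,\pi-\epsilon]$ is continuous with $\Phi(p)=0$, $\Phi(q)=\pi-\epsilon$, so $\Phi\circ\gamma_{p,q}$ surjects onto $[0,\pi-\epsilon]$. Fix $a$, put $t=\Phi(a)$, and suppose first that $t\in(0,\pi-\epsilon)$. By (ii) the hyperplane $H_t$ meets the interior of $\Omega$ and (by strict convexity) is nowhere tangent to $\mathscr{I}(M^n)$, so the slice $L_t\vcentcolon=\mathscr{I}(M^n)\cap H_t$ is the relative boundary $\partial_{H_t}(\Omega\cap H_t)$ of a convex body in $H_t\cong\mathbb{R}^n$, hence a closed connected convex hypersurface, and by (iii) $L_t\subseteq\overline{B_{H_t}((t,0,\dots,0),\rho)}$. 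The nearest-point projection onto $\Omega\cap H_t$ is $1$-Lipschitz (cf.\ the proof of Lemma \ref{lem length comp for convex hypersurface}), and its restriction to the round sphere $\partial B_{H_t}((t,0,\dots,0),\rho)$ maps onto $L_t$ (send each point of $L_t$ along its outward normal until it hits the sphere); hence any two points of $L_t$ are joined inside $L_t\subseteq\mathscr{I}(M^n)$ by a rectifiable curve of Euclidean length at most $\mathrm{diam}_{\mathrm{int}}(\partial B_{H_t}(\rho))=\pi\rho$, and transporting this curve back by the isometry $\mathscr{I}$ shows the corresponding two points of $M^n$ lie within $g$-distance $\pi\rho$. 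Choosing $s$ with $\gamma_{p,q}(s)\in L_t$ gives $d_g(a,\gamma_{p,q}(s))\le\pi\rho=\eta$. If instead $t\in\{0,\pi-\epsilon\}$, then $a$ and $p$ (resp.\ $q$) both lie on the flat convex face $\Omega\cap H_t\subseteq\mathscr{I}(M^n)$, which has Euclidean diameter $\le2\rho\le\eta$, so again $d_g(a,\gamma_{p,q})\le\eta$. This proves the claim.

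Now define $F\colon[0,\pi]\to M^n$ by $F(t)=\gamma_{p,q}(\min\{t,d_g(p,q)\})$, and write $d_g(p,q)=\pi-\delta$; then $0\le\delta\le\epsilon$, since $d_g(p,q)\ge|\mathscr{I}(p)-\mathscr{I}(q)|=\pi-\epsilon$ while $d_g(p,q)\le\mathrm{Diam}_g(M^n)\le\pi$ by Bonnet--Myers. Because sub-arcs of the minimizing geodesic $\gamma_{p,q}$ realize distances, a short case check gives $\bigl|\,d_g(F(t_1),F(t_2))-|t_1-t_2|\,\bigr|\le\delta\le\epsilon\le\eta$ for all $t_1,t_2\in[0,\pi]$, while the claim of the previous paragraph gives $M^n\subseteq\mathbf{U}_{\eta}(F([0,\pi]))$. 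Thus $F$ is an $\eta$-Gromov--Hausdorff approximation, Lemma \ref{lem GH map implies GH dist control} gives $d_{GH}((M^n,g),[0,\pi])\le4\eta=4\pi^{3/2}\sqrt{\epsilon}$, and the reduction of the first paragraph completes the proof.

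The main obstacle is the second paragraph: converting the \emph{extrinsic} width bound of Proposition \ref{prop one-side GH-appr} into an \emph{intrinsic} one. Extrinsic smallness alone does not control $d_g$---two points of a thin convex slice can be extrinsically close but intrinsically far---so one must argue slice by slice and exploit that each $L_t$ is a \emph{convex} hypersurface lying in a small ball; it is exactly here that the codimension-one hypothesis enters, via the convexity supplied by Lemma \ref{lem from Rc>0 to prin curv > 0}, and the Lipschitz nearest-point projection underlying Lemma \ref{lem length comp for convex hypersurface} is what upgrades ``convex hypersurface in $B(\rho)$'' to ``intrinsic diameter $\le\pi\rho$''. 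Verifying that $L_t$ is a genuine connected convex hypersurface for $t\in(0,\pi-\epsilon)$ (transversality and nondegeneracy of the slices), handling the two end slices, and the limiting passage over $\mathscr{I}$ are all routine.
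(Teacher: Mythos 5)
Your proposal is correct and follows essentially the same route as the paper: fix $\mathscr{I}$, normalize coordinates along $\overline{\mathscr{I}(p)\mathscr{I}(q)}$, use Proposition~\ref{prop one-side GH-appr} to place $\mathscr{I}(M^n)$ in a tube of radius $\rho=\sqrt{\pi\epsilon}$, invoke Lemma~\ref{lem from Rc>0 to prin curv > 0} for strict convexity, bound the $g$-diameter of each level set $P^{-1}(t)\cap\mathscr{I}(M^n)$ by $\pi\rho$, build a Gromov--Hausdorff approximation along $\gamma_{p,q}$, apply Lemma~\ref{lem GH map implies GH dist control}, and optimize over $\mathscr{I}$. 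The one genuine variant is the slice-diameter step: the paper cuts $\mathscr{I}(M^n)$ by the $2$-plane through $w_t,y_1,y_2$ and applies Lemma~\ref{lem length comp for convex hypersurface} with $n=1$ to the resulting closed convex plane curve, so $d_g(\mathscr{I}^{-1}(y_1),\mathscr{I}^{-1}(y_2))\le\tfrac12\ell(\gamma)\le\pi\rho$; you instead work with the full $(n-1)$-dimensional slice $L_t\subset H_t$ and push a great-circle arc of $\partial B_{H_t}(\rho)$ down to $L_t$ by the $1$-Lipschitz nearest-point projection, getting the same $\pi\rho$ bound on intrinsic diameter directly. Both exploit the same mechanism (the $1$-Lipschitz projection onto a convex body that underlies Lemma~\ref{lem length comp for convex hypersurface}), so this is a cosmetic rather than structural difference. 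Two tiny remarks: by strict convexity the extreme slices $t\in\{0,\pi-\epsilon\}$ are single points $\{\mathscr{I}(p)\},\{\mathscr{I}(q)\}$, so there is no ``flat face'' to consider and your end-slice case is trivially fine; and your surjectivity claim for the projection onto $L_t$ tacitly uses that $\partial B_{H_t}(\rho)$ misses the relative interior of $\Omega\cap H_t$, which indeed follows since $\Omega\cap H_t\subseteq\overline{B_{H_t}(\rho)}$.
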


\pf
{\textbf{Step (1)}. We firstly choose a map $\mathscr{I}\in \mathcal{IE}((M^n, g),  \mathbb{R}^{n+ 1})$ freely. In the rest argument, we assume $\mathrm{Diam}_{\mathscr{I}}(M^n, g)= |\mathscr{I}(p)- \mathscr{I}(q)|= \pi-\epsilon$ for some $p, q\in M^n$,  where $\epsilon> 0$. Assume $d_{g}(p,q)=\pi-\delta$,  then $\delta\leq \epsilon$.

Without loss of generality, we assume that $\mathscr{I}(q)$ is the origin in $\mathbb{R}^{n+ 1}$, and $\frac{\mathscr{I}(p)- \mathscr{I}(q)}{|\mathscr{I}(p)- \mathscr{I}(q)|}$ is the positive direction of $x_{n+ 1}$-axis.  Define the projection map $P: \mathbb{R}^{n+ 1}\rightarrow\mathbb{R}$, by $P(x_1, \cdots, x_{n+ 1})= x_{n+ 1}$. 


In the rest,  we assume $t\in[0,\pi-\epsilon]$. Define $w_t\vcentcolon= l_{\mathscr{I}(p), \mathscr{I}(q)}\cap P^{-1}(t)$.  For any point $c\in P^{-1}(t)\cap \mathscr{I}(M^n)$, we have $l_{w_t, c}\perp l_{\mathscr{I}(p), \mathscr{I}(q)}$. By Proposition \ref{prop one-side GH-appr}, we know that $|w_t- c|\le \sqrt{\pi\ep}.$ 
	
So $(P^{-1}(t)\cap \mathscr{I}(M^n))\subset \Big(B_{w_t}(\sqrt{\pi\ep})\cap P^{-1}(t)\Big),$ where $B_{w_t}(\sqrt{\pi\ep})$ is the open ball in $\mathbb{R}^{n+ 1}$ centered at $w_t$ with the radius $\sqrt{\pi\ep}$ (see Figure \ref{Cut f(M^n) by P^{-1}(t)}).

\begin{figure}[H]

		\centering
		\includegraphics[width=10cm,height=10cm]{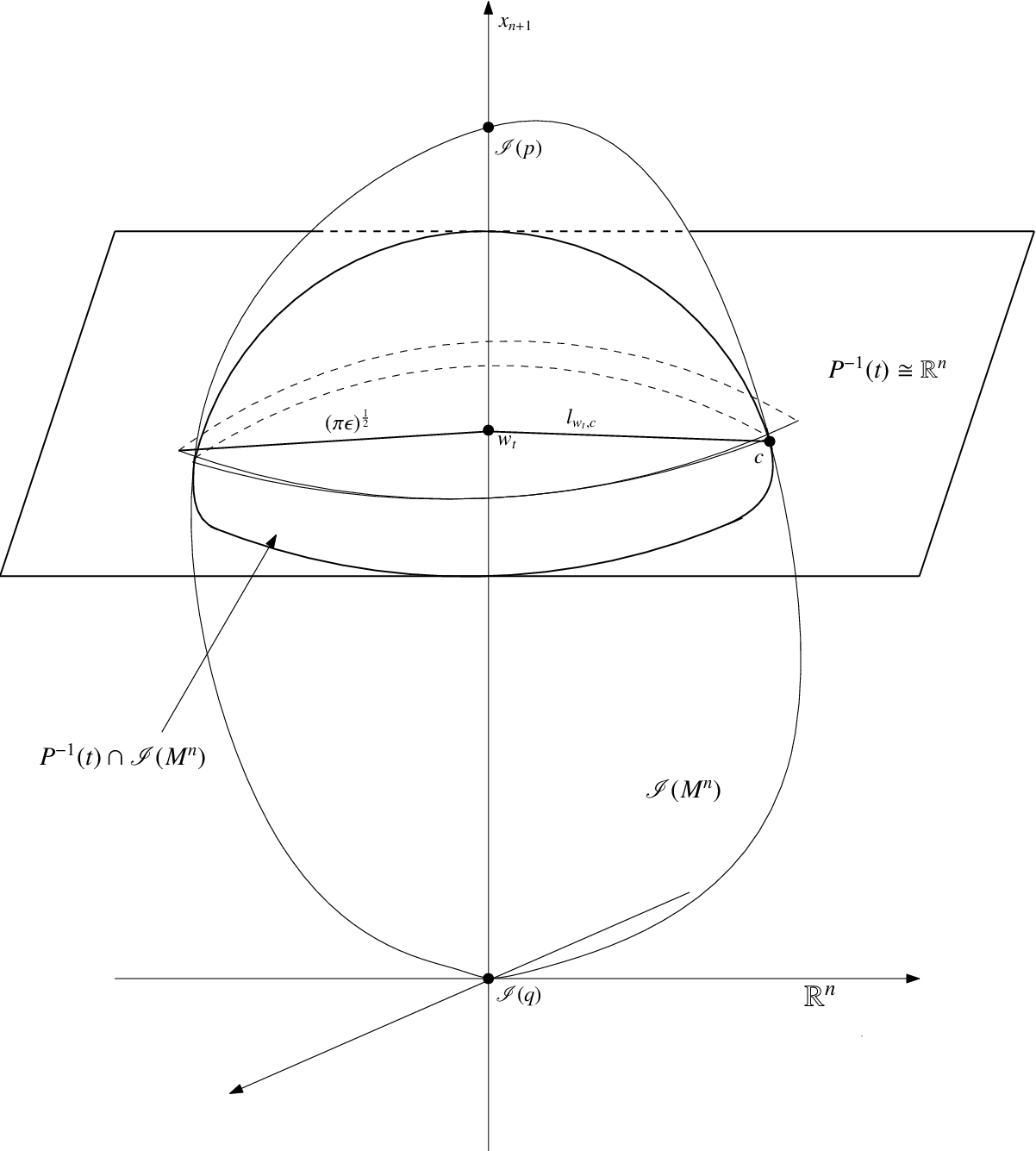}
		\caption{Cut $\mathscr{I}(M^n)$ by $P^{-1}(t)$}
		\label{Cut f(M^n) by P^{-1}(t)}

\end{figure}
}
	
From Lemma \ref{lem from Rc>0 to prin curv > 0},  we get that $\mathscr{I}(M^n)$ is a strictly convex hypersurface in $\mathbb{R}^{n+ 1}$.  For any distinct two points $y_1, y_2\in P^{-1}(t)\cap \mathscr{I}(M^n)$,  consider the $2$-dim plane $\mathbf{P}$ determined by $w_t, y_1, y_2$,  then $\gamma\vcentcolon= \mathbf{P}\cap \mathscr{I}(M^n)$ is a closed convex curve in $\mathbf{P}= \mathbb{R}^2$ (see Figure \ref{Cut f(M^n) by P}).  

\begin{figure}
	\centering
	\includegraphics[width=7cm,height=15cm]{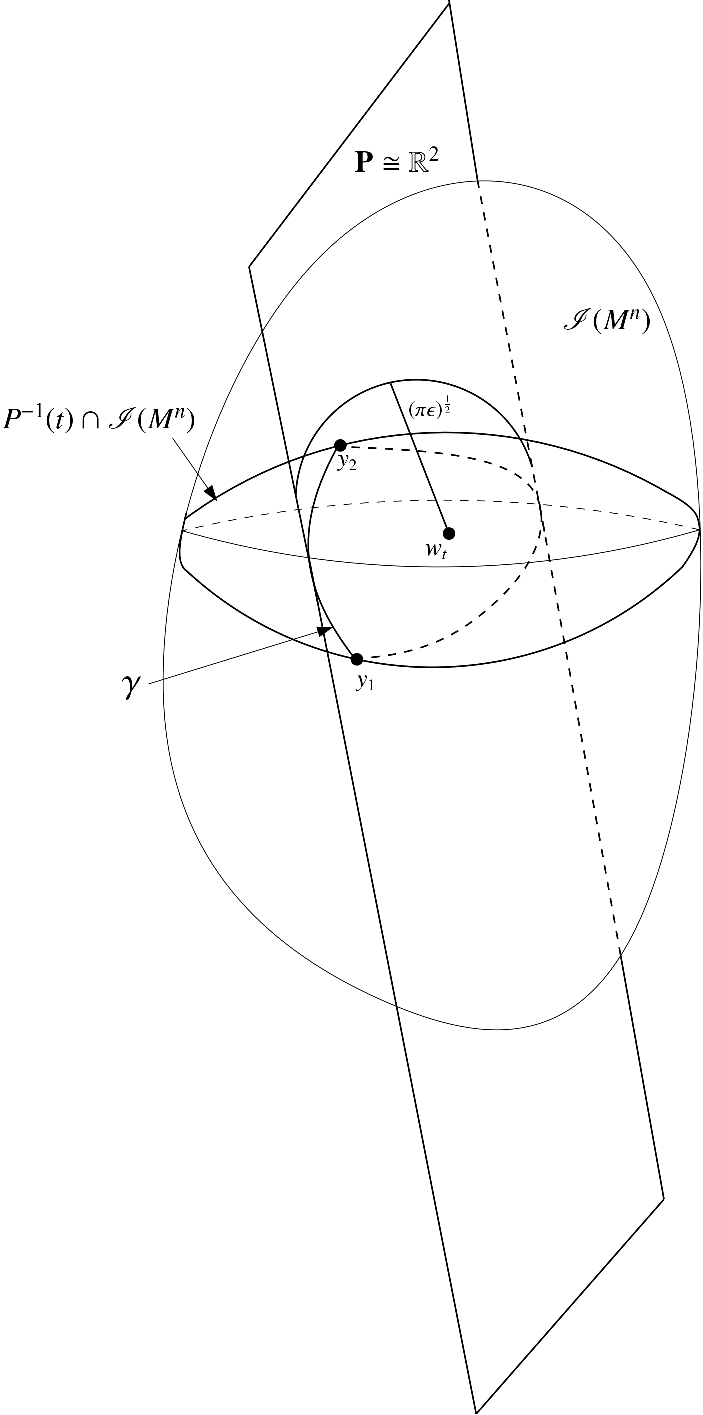}
	\caption{Cut $\mathscr{I}(M^n)$ by $\mathbf{P}$}
	\label{Cut f(M^n) by P}
\end{figure}

From the above,  we get that 
\begin{align}
\gamma \subseteq (\mathbf{P}\cap B_{w_t}(\sqrt{\pi\ep})).  \label{curve is in disk-1} 
\end{align}

By Lemma \ref{lem length comp for convex hypersurface} and (\ref{curve is in disk-1}), we have
\begin{align}
d_g(\mathscr{I}^{-1}(y_1), \mathscr{I}^{-1}(y_2))\leq \frac{1}{2}\ell(\gamma)\le \pi\sqrt{\pi \epsilon}, \quad \quad \quad \forall y_1\neq y_2\in P^{-1}(t)\cap \mathscr{I}(M^n). \nonumber 
\end{align}

Therefore we obtain
\begin{align}
d_g(p_1, p_2)\le \pi\sqrt{\pi \epsilon}, \quad \quad  \forall p_1, p_2\in M^n \ \text{with}\ P(\mathscr{I}(p_1))= P(\mathscr{I}(p_2)). \label{curve length on level sets}
\end{align}

\textbf{Step (2)}. Define $h:M^n\rightarrow\mathbb{R}$, by $h(z)= P(\mathscr{I}(z))$ for any point $z\in M$. Then the range of $h$ is $[0,\pi-\epsilon].$ 

Assume $\gamma_{q,p}$  is one unit speed, geodesic segment from $q$ to $p$ in $(M^n, g)$. Define the map $ G: [0, \pi]\rightarrow (M^n, g)$ as follows:
\begin{equation}\nonumber 
G(t)= \left\{
	\begin{array}{rl}
		&\gamma_{q,p}(t- \frac{\delta}{2}) \ ,  \quad \quad \quad \quad\quad   \quad t\in [\frac{\delta}{2}, \pi- \frac{\delta}{2}], \\
		&q ,   \quad \quad \quad \quad\quad   \quad t\in [0, \frac{\delta}{2}],\\
		&p, \quad \quad \quad \quad\quad   \quad t\in [\pi- \frac{\delta}{2}, \pi].
	\end{array} \right.
	\end{equation}
	
For $t_1, t_2\in [0, \pi]$,  note $\epsilon\leq \pi$, we have 
	\begin{align}
	\sup_{t_1, t_2\in [0, \pi]}\Big||t_1- t_2|- d_g(G(t_1), G(t_2))\Big|\leq \delta\leq \epsilon\leq \sqrt{\pi\epsilon}. \nonumber 
\end{align}	 

For any $y\in M^n$, we define $\varphi: M^n\rightarrow \mathbb{R}^{n+ 1}$ by requiring $\varphi(y)\in (\mathscr{I}(\gamma_{q,p})\cap P^{-1}(h(y))) \subseteq \mathbb{R}^{n+ 1}$ (note the choice of $\varphi$ is possibly not unique).

\begin{figure}
	\centering
	\includegraphics[width=10cm,height=10cm]{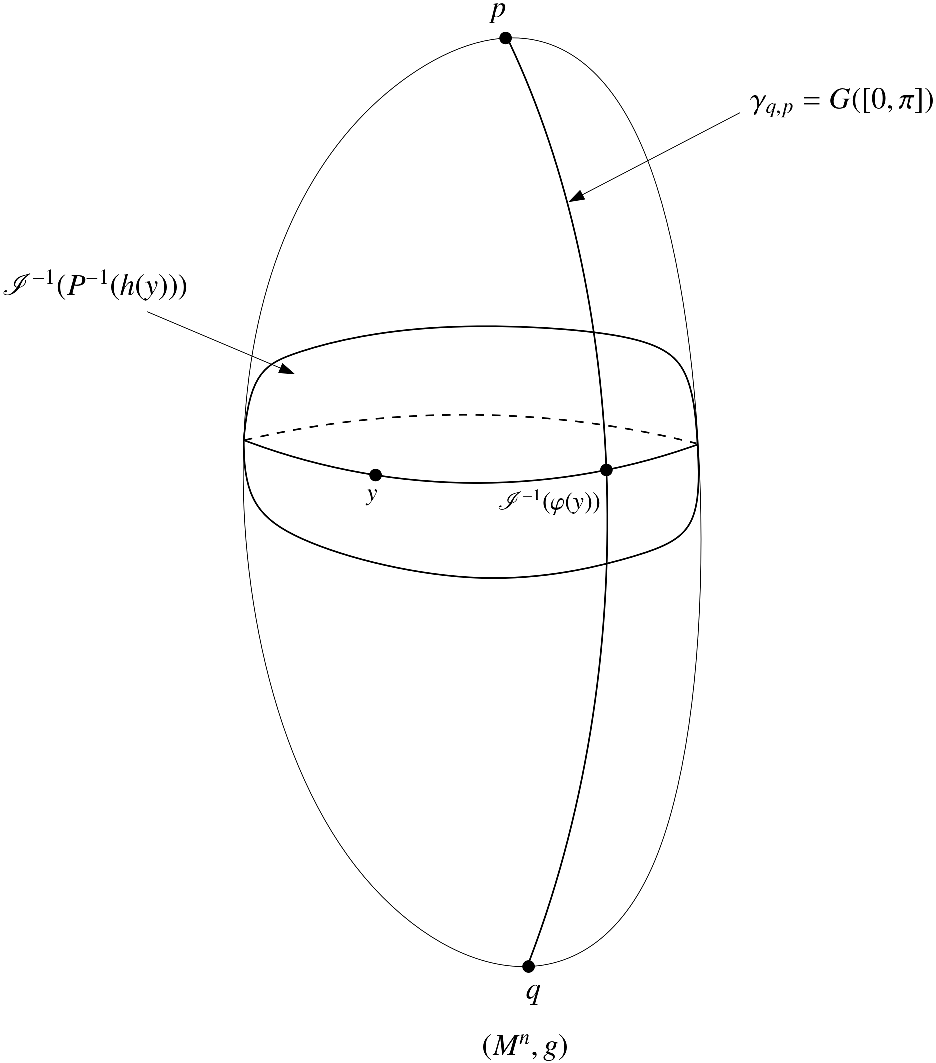}
	\caption{The $(\pi^{\frac{3}{2}}\cdot \sqrt{\epsilon})$-Gromov-Hausdorff approximation }
	\label{gh}
\end{figure}

Note $\mathscr{I}(y), \varphi(y)\in P^{-1}(h(y))$. By (\ref{curve length on level sets}), we obtain
\begin{align}
d_g(y, G[0, \pi])= d_g(y, \gamma_{q,p})\leq d_g(y, \mathscr{I}^{-1}(\varphi(y)))\leq \pi\sqrt{\pi \epsilon}. \nonumber 
\end{align}

Hence $G$ is an $(\pi^{\frac{3}{2}}\cdot \sqrt{\epsilon})$-Gromov-Hausdorff approximation from $[0, \pi]$ to $(M^n, g)$ (See Figure \ref{gh}).

By all the above and Lemma \ref{lem GH map implies GH dist control} , we get 
	\begin{align}
	d_{GH}((M^n, g), [0, \pi]) \leq 4\cdot \pi^{\frac{3}{2}}\cdot \sqrt{\pi- \mathrm{Diam}_{\mathscr{I}}(M^n, g)}\nonumber .
	\end{align}
	Because $\mathscr{I}$ is freely chosen from $\mathcal{IE}((M^n, g),  \mathbb{R}^{n+ 1})$, the conclusion follows. 

\qed

\section*{Acknowledgments}
We thank Tobias Holck Colding for his interest.  The second author is indebted to Jian Ge for helpful discussion during the preparation of this paper, and we also thank his comments and suggestion on the paper.  Last but not least,  we are grateful to Joel Spruck for his comments. 

\textbf{Data availability} Data sharing not applicable to this article as no datasets were generated
or analysed during the current study.

\textbf{Declarations}

\textbf{Conflict of interests} The authors declare that they have no conflict of interest.

\begin{bibdiv}
\begin{biblist}

\bib{AG}{article}{
    AUTHOR = {Abresch, Uwe},
    author = {Gromoll, Detlef},
     TITLE = {On complete manifolds with nonnegative {R}icci curvature},
   JOURNAL = {J. Amer. Math. Soc.},
  FJOURNAL = {Journal of the American Mathematical Society},
    VOLUME = {3},
      YEAR = {1990},
    NUMBER = {2},
     PAGES = {355--374},
      ISSN = {0894-0347,1088-6834},
   MRCLASS = {53C21},
  MRNUMBER = {1030656},
MRREVIEWER = {Ji-Ping\ Sha},
       DOI = {10.2307/1990957},
       URL = {https://doi.org/10.2307/1990957},
}

\bib{Brezis}{book}{
    AUTHOR = {Brezis, Haim},
     TITLE = {Functional analysis, {S}obolev spaces and partial differential
              equations},
    SERIES = {Universitext},
 PUBLISHER = {Springer, New York},
      YEAR = {2011},
     PAGES = {xiv+599},
      ISBN = {978-0-387-70913-0},
   MRCLASS = {35-01 (46-01 46E35 46N20 47F05)},
  MRNUMBER = {2759829},
MRREVIEWER = {Vicen\c{t}iu\ D.\ R\u{a}dulescu},
}

\bib{CE}{book}{
    AUTHOR = {Cheeger, Jeff},
    author= {Ebin, David G.},
     TITLE = {Comparison theorems in {R}iemannian geometry},
      NOTE = {Revised reprint of the 1975 original},
 PUBLISHER = {AMS Chelsea Publishing, Providence, RI},
      YEAR = {2008},
     PAGES = {x+168},
      ISBN = {978-0-8218-4417-5},
   MRCLASS = {53C20},
  MRNUMBER = {2394158},
       DOI = {10.1090/chel/365},
       URL = {https://doi.org/10.1090/chel/365},
}

 \bib{Cheng}{article}{
 	AUTHOR = {S.Y. Cheng},
 	TITLE = {Eigenvalue comparison theorem and its geometric applications},
 	JOURNAL = {Math.Z.},
 	VOLUME = {143},
 	YEAR = {1975},
 	PAGES = {289-297}, 	
 }

\bib{Colding-shape}{article}{
    AUTHOR = {Colding, Tobias H.},
     TITLE = {Shape of manifolds with positive {R}icci curvature},
   JOURNAL = {Invent. Math.},
  FJOURNAL = {Inventiones Mathematicae},
    VOLUME = {124},
      YEAR = {1996},
    NUMBER = {1-3},
     PAGES = {175--191},
      ISSN = {0020-9910,1432-1297},
   MRCLASS = {53C23 (53C21)},
  MRNUMBER = {1369414},
MRREVIEWER = {Man\ Chun\ Leung},
       DOI = {10.1007/s002220050049},
       URL = {https://tlink.lib.tsinghua.edu.cn:443/https/443/org/doi/yitlink/10.1007/s002220050049},
}

\bib{Colding-large}{article}{
    AUTHOR = {Colding, Tobias H.},
     TITLE = {Large manifolds with positive {R}icci curvature},
   JOURNAL = {Invent. Math.},
  FJOURNAL = {Inventiones Mathematicae},
    VOLUME = {124},
      YEAR = {1996},
    NUMBER = {1-3},
     PAGES = {193--214},
      ISSN = {0020-9910,1432-1297},
   MRCLASS = {53C23 (53C21)},
  MRNUMBER = {1369415},
MRREVIEWER = {Man\ Chun\ Leung},
       DOI = {10.1007/s002220050050},
       URL = {https://tlink.lib.tsinghua.edu.cn:443/https/443/org/doi/yitlink/10.1007/s002220050050},
}

\bib{EG}{book}{
    AUTHOR = {Evans, Lawrence C.},
    author= {Gariepy, Ronald F.},
     TITLE = {Measure theory and fine properties of functions},
    SERIES = {Textbooks in Mathematics},
   EDITION = {Revised},
 PUBLISHER = {CRC Press, Boca Raton, FL},
      YEAR = {2015},
     PAGES = {xiv+299},
      ISBN = {978-1-4822-4238-6},
   MRCLASS = {28-01},
  MRNUMBER = {3409135},
}

\bib{Gromov-book}{book}{
    AUTHOR = {Gromov, Misha},
     TITLE = {Metric structures for {R}iemannian and non-{R}iemannian
              spaces},
    SERIES = {Progress in Mathematics},
    VOLUME = {152},
      NOTE = {Based on the 1981 French original [ MR0682063 (85e:53051)],
              With appendices by M. Katz, P. Pansu and S. Semmes,
              Translated from the French by Sean Michael Bates},
 PUBLISHER = {Birkh\"auser Boston, Inc., Boston, MA},
      YEAR = {1999},
     PAGES = {xx+585},
      ISBN = {0-8176-3898-9},
   MRCLASS = {53C23 (53-02)},
  MRNUMBER = {1699320 (2000d:53065)},
MRREVIEWER = {Igor Belegradek},
} 

\bib{Nash}{article}{
    AUTHOR = {Nash, John},
     TITLE = {The imbedding problem for {R}iemannian manifolds},
   JOURNAL = {Ann. of Math. (2)},
  FJOURNAL = {Annals of Mathematics. Second Series},
    VOLUME = {63},
      YEAR = {1956},
     PAGES = {20--63},
      ISSN = {0003-486X},
   MRCLASS = {53.1X},
  MRNUMBER = {75639},
MRREVIEWER = {J.\ Schwartz},
       DOI = {10.2307/1969989},
       URL = {https://tlink.lib.tsinghua.edu.cn:443/https/443/org/doi/yitlink/10.2307/1969989},
}

\bib{Nash-C1}{article}{
    AUTHOR = {Nash, John},
     TITLE = {{$C^1$} isometric imbeddings},
   JOURNAL = {Ann. of Math. (2)},
  FJOURNAL = {Annals of Mathematics. Second Series},
    VOLUME = {60},
      YEAR = {1954},
     PAGES = {383--396},
      ISSN = {0003-486X},
   MRCLASS = {53.0X},
  MRNUMBER = {65993},
MRREVIEWER = {S.\ Chern},
       DOI = {10.2307/1969840},
       URL = {https://tlink.lib.tsinghua.edu.cn:443/https/443/org/doi/yitlink/10.2307/1969840},
}

	\bib{PP}	{book}{
	title={Riemannian geometry},
	author={Petersen, Peter},
	volume={171},
	year={2006},
	publisher={Springer}
}		

\bib{Spruck}{article}{
    AUTHOR = {Spruck, Joel},
     TITLE = {On the radius of the smallest ball containing a compact
              manifold of positive curvature},
   JOURNAL = {J. Differential Geometry},
  FJOURNAL = {Journal of Differential Geometry},
    VOLUME = {8},
      YEAR = {1973},
     PAGES = {257--258},
      ISSN = {0022-040X,1945-743X},
   MRCLASS = {53C20},
  MRNUMBER = {339005},
MRREVIEWER = {J.\ E.\ Brothers},
       URL = {http://tlink.lib.tsinghua.edu.cn:80/http/80/org/projecteuclid/yitlink/euclid.jdg/1214431642},
}

\bib{VH}{article}{
    AUTHOR = {Van Heijenoort, John},
     TITLE = {On locally convex manifolds},
   JOURNAL = {Comm. Pure Appl. Math.},
  FJOURNAL = {Communications on Pure and Applied Mathematics},
    VOLUME = {5},
      YEAR = {1952},
     PAGES = {223--242},
      ISSN = {0010-3640,1097-0312},
   MRCLASS = {52.0X},
  MRNUMBER = {52131},
MRREVIEWER = {H.\ Busemann},
       DOI = {10.1002/cpa.3160050302},
       URL = {https://tlink.lib.tsinghua.edu.cn:443/https/443/org/doi/yitlink/10.1002/cpa.3160050302},
} 

\bib{WZ}{article}{
    AUTHOR = {Wang, Bing},
    author= {Zhao, Xinrui},
     TITLE = {Canonical diffeomorphisms of manifolds near spheres},
   JOURNAL = {J. Geom. Anal.},
  FJOURNAL = {Journal of Geometric Analysis},
    VOLUME = {33},
      YEAR = {2023},
    NUMBER = {9},
     PAGES = {Paper No. 304, 31},
      ISSN = {1050-6926,1559-002X},
   MRCLASS = {53C20},
  MRNUMBER = {4615511},
       DOI = {10.1007/s12220-023-01375-x},
       URL = {https://tlink.lib.tsinghua.edu.cn:443/https/443/org/doi/yitlink/10.1007/s12220-023-01375-x},
}

\end{biblist}
\end{bibdiv}

\end{document}